\newtheorem{formula}{}[section]
\newtheorem{proposition}[formula]{Proposition}
\newtheorem{corollary}[formula]{Corollary}
\newtheorem{lemma}[formula]{Lemma}
\newtheorem{theorem}[formula]{Theorem}
\theoremstyle{definition}
\newtheorem{definition}[formula]{Definition}
\newtheorem{example}[formula]{Example}
\theoremstyle{remark}
\newtheorem*{remark}{Remark}
\begin{document}

\title[Singular Virasoro vectors and cohomology]
{Singular Virasoro vectors  and Lie algebra cohomology} \subjclass{17B56; 17B68}
\author{Dmitri Millionschikov}
\address{Moscow State University, Department of Mathematics and Mechanics,
Leninskie gory 1, 119992 Moscow, Russia}
\email{million@higeom.math.msu.su}
\date{26 May, 2014}
\keywords{singular vectors, graded Lie algebras, free resolution, representation, cohomology}
\thanks{Supported by the grant of Russian Scientific Foundation  N 14-11-00414}
\begin{abstract}
We present an explicit formula for a new family of Virasoro singular vectors.
As a corrolary we get formulas for differentials of Feigin-Fuchs-Rocha-Carridi-Wallach  resolution of the the positive nilpotent part of Virasoro (or Witt) algebra $L_1$.
\end{abstract}

\maketitle
\section*{Introduction}
The main goal of the paper is to present explicit formulae for all differentials $\delta_k$ of
the Feigin-Fuchs-Rocha-Carridi-Wallach-resolution (\cite{FeFu}, \cite{RochWall}), a free resolution of a one-dimensional $L_1$-module ${\mathbb C}$.  By $L_1$ we denote the positive part of the Witt algebra. The differentials $\delta_k$ are $2\times2$-matrices
whose elements belong to the universal enveloping algebra $U(L_1)$. More precisely, the matrix $\delta_k$ can be expressed by means of singular vectors $S_{p,q}(t)v$ in Verma modules over the Virasoro Lie algebra (Virasoro singular vectors $S_{p,q}(t) \in U(L_1)$), $t$ is a complex parameter:
$$
\delta_1=\left( \begin{matrix} S_{1,1}({-}\frac{3}{2}) , \;  S_{1,2}({-}\frac{3}{2}) 
\end{matrix}\right), \; 
\delta_k=\left(
\begin{matrix}
S_{1,3k{+}1}({-}\frac{3}{2}) & S_{2k{+}1,2}({-}\frac{3}{2})\\
 {-}S_{2k{+}1,1}({-}\frac{3}{2}) & {-}S_{1,3k{+}2}({-}\frac{3}{2})
\end{matrix}\right), 
$$
It is easy to find the first few singular vectors, for instance 
$$
S_{1,1}(t)=e_1 , \;  S_{1,2}(t)=e_1^2{+}t^{{-}1}e_2, \;
S_{3,1}(t){=}e_1^3{+}4te_2e_1{+}(4t^2{+}2t)
e_3.
$$
However, further computational complexity starts to increase exponentially. 
Fuchs and Feigin proposed to look for the operators $S_{p,q}(t)$ as
\begin{equation}
S_{p,q}(t)=e_1^{pq}{+}
\sum_{\begin{array}{c}pq \ge i_1 \ge \dots \ge i_s\ge 1\\i_1+\dots+i_s=pq
\end{array}.}P_{p,q}^{i_1,\dots,i_s}(t)e_{i_1}\dots
e_{i_s},
\end{equation}
where $P_{p,q}^{i_1,\dots,i_s}(t)$ are Laurent polynomials in the complex variable $t$. Feigin, Fuchs and later Astashkevich had found some properties of these Laurent polynomials, however no explicit general formula have been found..

For a long time no one could get to write an explicit formula for
the $S_{p,q}(t)$. At the end of 80s Benoit
and Saint-Aubin \cite{BenSA} found a beautiful explicit expression  for one family of singular vectors $S_{p,1}(t)$. Hence three of the four matrix elements of $D_k$ we know (due to the property $S_{p,q}(t)=S_{q,p}(t^{-1})$), but the fourth element $S_{2k{+}1,2}\left({-}\frac{3}{2} \right)$ remained unknown. The key idea of ​​the approach by Benoit and Saint-Aubin was the idea to consider expansions of $S_{p,1}(t)$ in all monomials 
$$e_{i_1}e_{i_2}\dots e_{i_s}, i_1{+}i_2{+}\dots{+}i_s{=}pq,$$ 
which correspond to all unordered partitions of $pq$, not only to ordered partitions $i_1 \ge i_2 \ge \dots \ge i_s\ge 1$, as did Feigin and Fuchs. Of course this extends the formula for $S_{p,q}(t)$, a linear combination of this type is not unique, but in some cases (the calculation of the cohomology with different coefficients) it helps to get interesting combinatorial formulas \cite{Mill}.

Bauer, Di Francesco, Itzykson, Zuber \cite{BDFIZ} have found a very elegant proof of singularity of vectors $S_{p,1}(t)v$. Moreover, using the Benoit-Saint-Aubin formula as a starting point, Bauer, Di Francesco, Itzykson, Zuber \cite{BDFIZ} presented a complete and straightforward algorithm for finding all singular vectors. However, their algorithm encounters technical difficulties for $S_{p,q}(t), p,q \ge 3$
and it is not still clear whether it is possible to get an explicit formula in general case by means of it. The Benoit-Saint-Aubin formula, examples $S_{2,2}(t)$ and $S_{3,2}(t)$
considered in \cite{BDFIZ} has allowed us to guess the general explicit formula for 
$S_{2,p}(t)$. We prove the singularity of vectors $S_{2,p}(t)v$ following \cite{BDFIZ}  the  proof for another family $S_{p,1}(t)v$.

\section{Singular vectors of Verma modules over Virasoro Lie algebra}

\label{Singular_vectors} The Virasoro algebra  Vir is infinite
dimensional Lie algebra, defined by its basis  $\{z, e_i, i \in
{\mathbb Z}\}$ and commutator relations:
$$
[e_i,z]=0, \; \forall i \in {\mathbb Z}, \quad
[e_i,e_j]=(j-i)e_{i+j}+\frac{j^3-j}{12}\delta_{-i,j}z.
$$
Vir is one-dimensional central extension of the Witt algebra $W$
(the one-dimensional center is spaned by $z$).
\begin{remark}
In \cite{FeFu1, FeFu2, FeFuRe} the symbol $L_1$ denotes the
"positive part" of the Witt algebra  $W$ (or the Virasoro
algebra $Vir$), i.e. the algebra of {\it polynomial} vector fields
on the line ${\mathbb R}$ which vanish at the origin together with
their first derivatives. We will use further the symbol  $L_1$ for
the notation of both the algebras. This will not cause the
confusion because the cohomology $H^*(L_1)$ and $H^*(W_+)$ are
isomorphic \cite{G, Fu}.
\end{remark}

A $Vir$-module $V(h,c)$ is called a Verma module over the Virasoro
algebra if it is free as a module over the universal enveloping
algebra $U(L_1)$ of the subalgebra $L_1 \subset {\rm Vir}$ and it
is generated by some vector $v$ such that
$$
zv=cv, \; e_0v=hv,\quad e_iv=0, \; i <0,
$$
where $c,h \in {\mathbb C}$. As a vector space $V(h,c)$ can be
defined by its infinite basis
$$
v, \; e_{i_1}\dots e_{i_s}v, \quad i_1\ge i_2 \ge \dots \ge i_s,
\; s \ge 1.
$$
A Verma module  $V(h,c)$ is ${\mathbb Z}_+$-graded module:
$$
V(h,c)=\bigoplus_{n=0}^{+\infty} V_n(h,c), \quad V_n(h,c)=\langle
e_{i_1}\dots e_{i_s}v, \; i_1+\dots+i_s=n\rangle.
$$
$V_n(h,c)$ is an eigen-subspace  of the operator $e_0$ that
corresponds to the eigenvalue $(h+n)$:
$$
e_0(e_{i_1}\dots e_{i_s}v)=(h+i_1+\dots+i_s)e_{i_1}\dots e_{i_s}v.
$$
In addition to this, $zw=cw$ for all $w \in V(h,c)$.
\begin{definition}
A nontrivial vector $w\in V(h,c)$ is called singular if $e_iw=0$
for all $i <0$.
\end{definition}
\begin{remark}
A subalgebra $Vir^-$ which is spanned  by 
$\{e_i, i < 0\}$ is multiplicatively generated by two elements $e_{-1}$ and $e_{-2}$. Hence a vector
$w \in V(h,c)$ is singular if and only if 
$$
e_{-1}w=e_{-2}=0.
$$
\end{remark}

A homogeneous singular vector $w \in V_n(h,c)$ with the grading
equal to $n$ generates in $V(h,c)$ a submodule that is isomorphic
to $V(h+n,c)$.

It is not difficult to present first examples of singular vectors. At the first level
$n=1$ there is a singular vector $w_1 \in V_1(h,c)$ if and only if $h=0$.
Indeed the subspace $V_1(h,c)$ is one-dimensional and it is spanned by 
$e_1v$, but on the another hand 
$$
e_{-1}e_1v=2e_0v=2hv, \quad e_{-2}e_1v=0.
$$

A two-dimensional subspace coincides with the span of vectors $e_1^2v$ and $e_2v$. The
condition that the vector $w_2 \in V_2(h,c)$ is annihilated by the operator $e_{-1}$ is equivalent to the condition that (up to multiplication by a constant) the vector $w_2$ is equal to $e_1^2v-\frac{2}{3}(2h+1)e_2v$. On the another hand $e_{-2}$  annihilates  $w_2$ if and only if the parameters $h$ and $c$ of a Verma module $V(h,c)$ are related by
$$
\label{V_2_equation}
6h-\frac{2}{3}(2h+1)\left(4h+\frac{c}{2}\right)=0.
$$

It is easy to verify that the set of solutions of this equation can be parametrized in a following way
\begin{equation}
c(t)=13+6t+6t^{-1}, \quad h(t)=-\frac{3}{4}t-\frac{1}{2}, 
\end{equation}
where $t \ne 0$ runs the complex numbers (or runs the reals,  it depends on the task).

The last remark can be reformulated as follows: for any value of $t$  there is a unique (up to multiplication by a constant) singular vector $w_2=e_1^2v+te_2v, w_2 \in V_2(h(t), c(t))$, where
$h(t)$ and $c(t)$ are defined by equations (\ref{V_2_equation}).

\begin{theorem}[\cite{Kac}, \cite{FeFu, FeFu2}]
\label{Kac_theorem} In the Verma module $V(h,c)$ there is a
singular vector $w \in V_n(h,c)$ with the grading not higher than
$n$ if and only if, when two natural numbers $p$ and $q$ can been
found and also a complex number $t$ such that
\begin{equation}
\begin{split}
pq \le n, \quad c=c(t)=13+6t+6t^{-1},\\
h=h_{p,q}(t)=\frac{1-p^2}{4}t+\frac{1-pq}{2}+\frac{1-q^2}{4}t^{-1}.
\end{split}
\end{equation}
\end{theorem}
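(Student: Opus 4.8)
The plan is to reduce the statement to the (non)vanishing of the Kac determinant by way of the contravariant (Shapovalov) bilinear form. On $V(h,c)$ there is a unique symmetric bilinear form $\langle\cdot,\cdot\rangle$ normalised by $\langle v,v\rangle=1$ and obeying the contravariance property $\langle e_iw,u\rangle=\langle w,e_{-i}u\rangle$ for every $i$; its existence and uniqueness follow by induction on the grading, since $e_{-i}$ strictly lowers the grading and the values of the form are determined from those of lower degree through the Virasoro relations. Because $e_i$ shifts the grading by $i$, distinct graded pieces are orthogonal, so the form is block-diagonal and on each $V_n(h,c)$ it is represented, in the monomial basis $e_{i_1}\cdots e_{i_s}v$, by a Gram matrix $M_n(h,c)$ whose entries are polynomials in $h$ and $c$.

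First I would identify singular vectors with the radical of this form. For $n\ge 1$ a homogeneous vector $w\in V_n(h,c)$ satisfies $e_{-i}w=0$ for all $i>0$ if and only if $\langle w,e_iu\rangle=\langle e_{-i}w,u\rangle=0$ for every $u\in V_{n-i}(h,c)$ and every $i>0$; since every monomial of degree $n$ has a leading factor $e_{i_1}$ with $i_1\ge 1$ and hence the shape $e_iu$, the vectors $e_iu$ span $V_n(h,c)$, so the condition is equivalent to $w\in\Ker M_n(h,c)$. Summing over $n$, the radical of the full form is exactly the maximal proper submodule of $V(h,c)$, which is generated by singular vectors; therefore a singular vector of grading at most $n$ exists if and only if $\det M_n(h,c)=0$, and the whole theorem turns into a description of the zero locus of this determinant.

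The core of the argument is then the Kac determinant formula
$$
\det M_n(h,c)=K_n\prod_{\substack{p,q\ge 1\\ pq\le n}}\bigl(h-h_{p,q}(t)\bigr)^{P(n-pq)},
$$
where $c=c(t)$, the constant $K_n\neq 0$ is independent of $h$, $P$ is the classical partition function, and $h_{p,q}(t)$ is as in the statement. I would establish it in two stages. Each factor is produced by exhibiting, at the parameter value $h=h_{p,q}(t)$, a genuine singular vector at level $pq$ (this is where explicit constructions such as the Benoit--Saint-Aubin family enter); its descendants then force $\det M_n$ to vanish, and a local analysis of the order of vanishing yields the exponent $P(n-pq)$, the number of partitions of $n-pq$. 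The main obstacle is the opposite inclusion, namely that there are no further zeros. For this I would compare degrees: computing the degree of $\det M_n(h,c)$ in $h$ from its dominant diagonal contribution (coming from the highest power of $e_1$) and checking it equals $\sum_{pq\le n}P(n-pq)$, the total number of linear factors above. Matching these two numbers forces the displayed product to exhaust all roots and simultaneously pins down $K_n$.

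Finally both directions of the theorem follow formally. If a singular vector of grading $m\le n$ exists, then $\det M_m(h,c)=0$, so by the formula $h=h_{p,q}(t)$ for some $p,q$ with $pq\le m\le n$ and some $t$ with $c=c(t)$, the latter being solvable since $t\mapsto 13+6t+6t^{-1}$ is surjective onto $\C$. Conversely, given natural numbers $p,q$ with $pq\le n$ and a $t$ with $c=c(t)$, $h=h_{p,q}(t)$, the factor $\bigl(h-h_{p,q}(t)\bigr)$ vanishes, hence $\det M_{pq}(h,c)=0$, which produces a singular vector of grading at most $pq\le n$.
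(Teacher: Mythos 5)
The paper itself offers no proof of this theorem: it is quoted as a classical result of Kac and Feigin--Fuchs, so there is nothing internal to compare against. Your strategy --- identify singular vectors of grading $\le n$ with the radical of the Shapovalov form on $\bigoplus_{m\le n}V_m(h,c)$, and then read the answer off the Kac determinant formula --- is the standard route in the literature, and the reductions you carry out around it are sound: the contravariance argument showing that $w$ is singular iff $w\in\mathrm{Ker}\,M_m$, the fact that descendants of a singular vector force $\det M_n=0$ for all larger $n$, the surjectivity of $t\mapsto 13+6t+6t^{-1}$, and the degree count $\deg_h\det M_n=\sum_{\lambda\vdash n}\ell(\lambda)=\sum_{pq\le n}P(n-pq)$ (via $\sum_{i,j\ge1}P(n-ij)=\sum_{\lambda}\ell(\lambda)$) are all correct.

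The genuine gap is in the divisibility half of the determinant formula, which is where all the depth of the theorem sits. To conclude that $(h-h_{p,q}(t))^{P(n-pq)}$ divides $\det M_n$ for \emph{every} pair $(p,q)$, you must exhibit a singular vector at level $pq$ at generic points of each curve $h=h_{p,q}(c)$ --- and this is precisely the ``if'' direction of the theorem you are proving, so it cannot be extracted from the formula without an independent construction. The only construction you invoke, the Benoit--Saint-Aubin family, covers $S_{p,1}$ (hence by $t\mapsto t^{-1}$ also $S_{1,q}$) but not general $(p,q)$; the present paper adds only the family $S_{2,p}$. The classical proofs close this gap by other means, e.g.\ the Feigin--Fuchs bosonization, where screening (vertex) operators produce intertwiners between Fock modules whose kernels/cokernels yield singular vectors at level $pq$ for all $(p,q)$, or Kac's original asymptotic analysis of the determinant. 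A second, more minor, soft spot: the curves $h=h_{p,q}(c)$ intersect one another, so ``a local analysis of the order of vanishing'' needs the standard lemma that corank $\ge k$ along an irreducible divisor forces the determinant to vanish to order $\ge k$ there, applied at generic points of each curve before the degree count matches total multiplicities. With an all-$(p,q)$ construction of singular vectors supplied, the rest of your argument goes through.
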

In particular, the following assertion holds \cite{Fu2}: with
the fixed natural numbers $p$ and $q$ and with an arbitrary
complex number $t$ the Verma module $V(h_{p,q}(t),c(t))$ contains
a singular vector $w_{p,q}(t)$ of degree $pq$, moreover the vector
$w_{p,q}(t)$ is determined unambiguously up to a multiplication by
some scalar:
$$
w_{p,q}(t)=S_{p,q}(t)v=\sum_{|I|=pq}P_{p,q}^I(t)e_Iv=
\sum_{i_1+\dots+i_s=pq}P^{i_1,\dots,i_s}_{p,q}(t)e_{i_1}\dots
e_{i_s}v,
$$
where $S_{p,q}(t)$ denotes some element of the universal
enveloping algebra $U(L_1)$. The coefficients $P_{p,q}^I(t)$
depend polynomially on $t$ and $t^{-1}$. We assume that the
coefficient $P^{1,\dots,1}_{p,q}(t)$ is equal to one. Obviously
that $S_{p,q}(t)=S_{q,p}(t^{-1})$.

\begin{theorem}[Benoit, Saint-Aubin]
\begin{equation}
\label{Benoit_SA}
S_{p,1}(t)=
\sum_{\begin{array}{c}i_1,\dots,i_s\\i_1+\dots+i_s=p
\end{array}.}c_p(i_1,\dots,i_s)t^{p-s}e_{i_1}\dots
e_{i_s}.
\end{equation}
where the sums are all over all partitions of $p$ by positive
numbers without any ordering restriction, and the coefficients
$c_p(i_1,\dots,i_s)$ are defined by the formulas
\begin{equation}
\label{coefficient_c} 
c_p(i_1,\dots,i_s)=
\frac{(p{-}1)!^2}{\prod_{l=1}^{s{-}1} \left((\sum_{q=1}^l i_q)(p-\sum_{q=1}^l i_q)\right)}
\end{equation}
\end{theorem}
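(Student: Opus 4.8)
The plan is to verify directly that the vector on the right-hand side, written $w = S_{p,1}(t)v$, is \emph{the} singular vector in $V(h_{p,1}(t),c(t))$ whose existence and uniqueness up to scale are guaranteed by Theorem~\ref{Kac_theorem} at $q=1$. The vector $w$ is homogeneous of degree $p\cdot 1 = p$, and its coefficient at $e_1^{p}$ is $c_p(1,\dots,1)t^{\,0} = ((p-1)!)^2/\prod_{l=1}^{p-1} l(p-l) = 1$, so $w\neq 0$. Since $Vir^-$ is multiplicatively generated by $e_{-1}$ and $e_{-2}$, it suffices to establish the two annihilation conditions $e_{-1}w = 0$ and $e_{-2}w = 0$; this forces $w$ to be singular, and the normalization then identifies it with $w_{p,1}(t)$.

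First I would compute the action of $e_{-1}$. From the relations one has $[e_{-1},e_j] = (j+1)e_{j-1}$ with no central term for any $j$, and $e_{-1}v=0$, so the adjoint action is a derivation and
\begin{equation*}
e_{-1}\,e_{i_1}\cdots e_{i_s}v \;=\; \sum_{k=1}^{s}(i_k+1)\,e_{i_1}\cdots e_{i_k-1}\cdots e_{i_s}v .
\end{equation*}
When $i_k-1>0$ this yields a monomial of degree $p-1$; when $i_k=1$ the factor becomes $e_0$, which after commutation to $v$ contributes the scalar $h_{p,1}(t)+(i_{k+1}+\dots+i_s)$. Substituting the explicit coefficients and collecting terms by a fixed degree-$(p-1)$ monomial, the vanishing of $e_{-1}w$ becomes, for each such monomial, a single scalar identity. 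Writing $a_l = i_1+\dots+i_l$ for the partial sums, the coefficient $c_p(i_1,\dots,i_s) = ((p-1)!)^2/\prod_{l=1}^{s-1}a_l(p-a_l)$ depends only on these sums, and lowering $i_k$ shifts exactly the $a_l$ with $l\ge k$. The vanishing then reduces to a partial-fraction/telescoping identity among the factors $a_l(p-a_l)$, in which the boundary contributions of the $e_0$'s are balanced precisely by the choice $h=h_{p,1}(t)$; I would prove this identity by induction on the number of parts $s$, reducing the sum over lowering positions to a telescoping cancellation.

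The same scheme handles $e_{-2}$, now using $[e_{-2},e_j]=(j+2)e_{j-2}+\tfrac{j^3-j}{12}\delta_{j,2}z$. Here lowering drops an index by two, the case $i_k=2$ produces the central contribution $\tfrac12 z = \tfrac12 c(t)$, and the case $i_k=1$ produces a stray $e_{-1}$ that must be commuted further to the right before reaching $v$. The resulting cancellation involves \emph{both} special values $h_{p,1}(t)$ and $c(t)=13+6t+6t^{-1}$, and this is the step in which the precise form of the central charge is indispensable. I expect the main obstacle to be exactly this combinatorial bookkeeping: enumerating the many ways a given lower monomial arises and verifying the partial-fraction identity that forces the coefficients to cancel. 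The $e_{-2}$ verification is the more delicate of the two, both because of the auxiliary $e_{-1}$ terms generated when $i_k=1$ and because it is the computation that pins down the value of $c(t)$.
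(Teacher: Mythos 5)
Your reduction is set up correctly: it suffices to check $e_{-1}w=0$ and $e_{-2}w=0$, the normalization $c_p(1,\dots,1)=1$ is right, and the commutators $[e_{-1},e_j]=(j{+}1)e_{j-1}$ and $[e_{-2},e_j]=(j{+}2)e_{j-2}+\tfrac12\delta_{j,2}z$ are correct, as is the observation that the $i_k=1$ terms feed in $h_{p,1}(t)$ and the $i_k=2$ terms feed in $c(t)$. But the proposal stops exactly where the theorem lives. The claim that, for each fixed composition of $p-1$ (resp. $p-2$), the contributions of all source monomials cancel \emph{is} the content of the statement, and you neither write down the identity nor prove it; ``I would prove this identity by induction on $s$'' and ``I expect the main obstacle to be exactly this combinatorial bookkeeping'' concede that the cancellation is unverified. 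Moreover the $e_{-2}$ case is structurally harder than your description suggests: a fixed target composition of $p-2$ receives contributions not only from raising one part by $2$ and from an inserted part equal to $2$ (giving $4e_0+\tfrac12 z$), but also from \emph{pairs} of positions, because each part equal to $1$ produces a stray $3e_{-1}$ that then acts on every factor to its right. The identity to be proved is therefore a double sum over pairs, not a single telescoping sum, and it is not evident that an induction on the number of parts organizes it. As written this is a plan, not a proof.

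For comparison: the paper does not reprove the Benoit--Saint-Aubin theorem (it cites \cite{BenSA} and \cite{BDFIZ}), but its proof of the analogous Theorem~\ref{main_theorem} for $S_{2,p}(t)$ shows the organization that makes this tractable, and the same scheme applies here. Since $c_p(i_1,\dots,i_s)$ depends only on the partial sums $a_l=i_1+\dots+i_l$ through the nested product $\prod_l a_l(p-a_l)$, the vector $w$ can be built by a one-step recursion $v^{(k)}\propto\frac{1}{k(p-k)}\sum_{j=1}^{k}e_jv^{(k-j)}$ with $w$ obtained from the top-level sum. One then proves, by a single induction on the level $k$, that $e_{-1}v^{(k)}$ is a fixed scalar multiple of $v^{(k-1)}$ and $e_{-2}v^{(k)}$ a fixed scalar multiple of $v^{(k-2)}$, the scalars vanishing at $k=p$. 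This replaces your monomial-by-monomial cancellation (one identity per composition, exponentially many) by one scalar identity per level, and it absorbs the stray $e_{-1}$ terms in the $e_{-2}$ computation through the already-established $e_{-1}$ lemma rather than through a fresh double-sum cancellation. To complete your direct approach you would have to state and prove the cancellation identities explicitly; otherwise reorganize along these recursive lines.
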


\begin{example}
$$
S_{1,1}(t)=e_1,\;\;S_{2,1}(t)=e_1^2+te_2,\;\;S_{3,1}(t)=e_1^3+t(2e_1e_2+2e_2e_1)+4t^2e_3.
$$
\end{example}

\begin{theorem}
\label{main_theorem}
Let $V$ be a Verma module over the Virasoro algebra $Vir$. $V$ generated by the vector $v$ and such that $V$
corresponds to the (complex) parameter $t$:
with 
$$
c{=}13{+}6t{+}6t^{-1}, h{=}{-}\frac{(p{-}1{+}t)(t^{-1}\left(p{+}1){+}3\right)}{4}.
$$

Let consider an element of universal enveloping algebra $U(L_1)$ definend by the formula
\begin{equation}
\label{new_family}
S_{2,p}(t)=
{\sum_{\begin{array}{c}i_1,\dots,i_s\\i_1{+}{\dots}{+}i_s{=}2p
\end{array}.}}
f_p(i_1,\dots,i_s)e_{i_1}\dots
e_{i_s}.
\end{equation}
where the sums are all over all partitions of $2p$ by positive
numbers without any ordering restriction, and the coefficients (that are rational functions on $t$)
$f_p(i_1,\dots,i_s)$ are defined by the formulas
\begin{equation}
\label{coefficient_f} 
f_p(i_1{,}{\dots}{,}i_s)
{=}\frac{(2p{-}1)!^2 (2t)^{s{-}2p} \prod\limits_{r=1}^{2p{-}1}(p{-}t{-}r) \prod\limits_{m=1}^s \left( i_m(2t{+}1){+}2(p{-}t{-}\sum\limits_{n=1}^m i_n)\right)}
{\prod\limits_{l=0}^{2p{-}1} (2p{-}1{-}2l)\prod\limits_{l=1}^{s-1}\left((\sum\limits_{n=1}^l i_n)(2p{-}\sum\limits_{n=1}^l i_n)(p{-}t{-}\sum\limits_{n=1}^l{i_n})\right)}.
\end{equation}
Then $S_{2,p}(t)v$ is a singular vector of $V$:
\begin{equation}
\label{singular_cond}
e_{-k}S_{2,p}v=0, \quad k \in {\mathbb N}.
\end{equation}
\end{theorem}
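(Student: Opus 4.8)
The plan is to verify the singularity condition (\ref{singular_cond}) by a direct commutation argument, following the proof for the family $S_{p,1}(t)$ in \cite{BDFIZ}. First I would reduce to the two generators $e_{-1}$ and $e_{-2}$ of $Vir^-$. By the Remark that a vector is singular if and only if it is annihilated by $e_{-1}$ and $e_{-2}$, it suffices to check $e_{-1}S_{2,p}(t)v=0$ and $e_{-2}S_{2,p}(t)v=0$. Once these hold, the bracket $[e_{-1},e_{-2}]=-e_{-3}$ and an easy induction propagate the vanishing to all $e_{-k}$: for $k\ge 2$ one has $e_{-(k+1)}=\mathrm{const}\cdot[e_{-1},e_{-k}]$, so if $e_{-1}S_{2,p}(t)v=e_{-k}S_{2,p}(t)v=0$ then $e_{-(k+1)}S_{2,p}(t)v=0$, which gives (\ref{singular_cond}) for every $k\in{\mathbb N}$.

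Second, I would set up the commutation bookkeeping for a single monomial. Moving $e_{-k}$ rightward through $e_{i_1}\cdots e_{i_s}v$ with $[e_{-k},e_{i_m}]=(i_m+k)e_{i_m-k}+\tfrac{i_m^3-i_m}{12}\delta_{k,i_m}z$ and the relations $e_{-k}v=0$, $e_0v=hv$, $zv=cv$, one produces three kinds of terms. For $k=1$ each commutator lowers one index by $1$; a part equal to $1$ becomes $e_0$, which when pushed to the right contributes the scalar $h+(\text{grading to its right})$, so $h$ enters the $k=1$ identities. For $k=2$ the same happens with an extra central contribution (hence $c$) when $i_m=2$, together with a second-order cascade when $i_m=1$, since then $[e_{-2},e_1]=3e_{-1}$ and the resulting $e_{-1}$ must itself be commuted through the remaining factors. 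Collecting terms by the resulting monomial of degree $2p-k$ reduces each of $e_{-1}S_{2,p}(t)v=0$ and $e_{-2}S_{2,p}(t)v=0$ to a finite family of scalar identities in $t$ among the coefficients $f_p$.

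Third, and this is where the shape of (\ref{coefficient_f}) is essential, I would reorganise the sum over compositions by their partial sums $\sigma_l=\sum_{n=1}^l i_n$, which range over subsets of $\{1,\dots,2p-1\}$. In these variables $f_p$ factorises as a composition-independent constant times $(2t)^{s}\prod_{l=1}^{s-1}\bigl(\sigma_l(2p-\sigma_l)(p-t-\sigma_l)\bigr)^{-1}$ times the product of linear factors $i_m(2t+1)+2(p-t-\sigma_m)$. Each target monomial of degree $2p-k$ is hit by only a controlled family of source compositions, namely those obtained by raising one part by $k$ or by inserting a part equal to $k$ that collapses to a scalar. Substituting the factorised formula turns the coefficient-of-a-monomial identities into telescoping relations among adjacent factors of these products, which I would verify as rational-function identities in $t$.

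The hard part will be the $e_{-2}$ computation. The central term forces the identities to involve $c$, and it is precisely the stated relation $c=13+6t+6t^{-1}$ together with the stated value of $h$ that must make them close; I expect checking this closure to be the main obstacle. The combinatorial cascade coming from parts equal to $1$ (which produce $e_{-1}$) is the most delicate piece of bookkeeping, because it couples the $e_{-2}$ identities to the $e_{-1}$ structure. Organising the argument so that the already-established $k=1$ cancellation is reused inside the $k=2$ step should keep this computation under control.
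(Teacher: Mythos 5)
Your reduction to $e_{-1}$ and $e_{-2}$ is exactly the paper's starting point, and your bookkeeping of the commutators (the $e_0$ and central terms arising from $[e_{-2},e_2]=4e_0+\tfrac12 z$, the cascade $[e_{-2},e_1]=3e_{-1}$) is sound in outline. But the proposal stops precisely where the proof has to begin: every one of the ``telescoping relations among adjacent factors'' that you promise to ``verify as rational-function identities in $t$'' is left unestablished, and those identities are the entire mathematical content of the theorem. There is also a structural issue you do not address: the monomials $e_{j_1}\cdots e_{j_r}v$ indexed by unordered compositions are linearly \emph{dependent} in $V$ (the paper remarks on this explicitly when discussing non-uniqueness of the $f_p$), so setting each collected coefficient of a target word to zero is sufficient but not necessary for $e_{-k}S_{2,p}(t)v=0$. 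Your plan tacitly assumes the cancellation happens word by word; if it only happens modulo the relations of $U(L_1)$, the scheme as described does not close, and nothing in the proposal rules this out.

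The paper avoids both difficulties by not computing with the closed formula at all. It defines a tower $v^{(0)},\dots,v^{(2p-1)}$ by the one-step recursion
$$
v^{(k)}=\frac{2t\sum_{j=1}^k\left((j-1)(2t-1)+2k-2p-1\right)e_jv^{(k-j)}}{k(2p-k)(k-p-t)},
$$
proves by induction on $k$ the two clean identities $e_{-1}v^{(k)}=-(p+2-k+3t)v^{(k-1)}$ and $e_{-2}v^{(k)}=-(p+4-k+5t)v^{(k-2)}$ (this is where $h$ and $c=13+6t+6t^{-1}$ enter, through the base cases and the $e_0$ and $z$ contributions), and then observes that the degree-$2p$ combination $w=2t\sum_{j}\left((j-1)(2t-1)+2p-1\right)e_jv^{(2p-j)}$ is annihilated by both operators because the corresponding factor $k(2p-k)(k-p-t)$ is replaced by the vanishing right-hand side at $k=2p$. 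The closed formula (\ref{coefficient_f}) is only recovered afterwards by unwinding the recursion and normalizing the coefficient of $e_1^{2p}$. Your reorganisation of $f_p$ by the partial sums $\sigma_l$ is in effect a rediscovery of this recursive structure; if you want to salvage the direct approach, the efficient route is to reverse-engineer the recursion from that factorization and prove the two displayed identities for $v^{(k)}$ by induction, which replaces the uncontrolled double cascade in your $e_{-2}$ step (where the generated $e_{-1}$ must be re-commuted through the tail) by a single one-line telescoping identity per induction step.
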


For instance $S_{2,1}=e_1^2{+}te_2$ and
\begin{equation}
\begin{split}
S_{2,2}(t)=e_1^4{+}4te_1e_2e_1{+}
\frac{(1{-}t^2)}{t}(e_1^2e_2{+}e_2e_1^2){+}\frac{(1{-}t^2)^2}{t^2}e_2^2{+}\\{+}\frac{(1{+}t)(4t{-}1)}{t}e_1e_3{+}\frac{(1{-}t)(4t{+}1)}{t}e_3e_1{+}\frac{3(1{-}t^2)}{t}e_4.
\end{split}
\end{equation}

\begin{proof}
We define the sequence of vectors $v^{(0)}, v^{(1)},\dots, v^{(2p{-}1)}$ by 
$$
v^{(0)}=v
$$ 
and the following recursive relation
 for $k=1,\dots, 2p{-}1$:
\begin{equation}
v^{(k)}=\frac{2t\sum\limits_{j{=}1}^k\left( (j{-}1)(2t{-}1){+}2k{-}2p{-}1\right)e_jv^{(k{-}j)}
}{k(2p{-}k)(k{-}p{-}t)}
\end{equation}
For instance
\begin{equation}
\begin{split}
v^{(1)}=\frac{2t(1{-}2p)e_1v^{(0)}}{(2p{-}1)(1{-}p{-}t)}, \\ 
v^{(2)}=\frac{2t\left(2(t{+}1{-}p)e_2v^{(0)} {+} (3{-}2p)e_1v^{(1)}\right)}{2(2p{-}2)(2{-}p{-}t)}.
\end{split}
\end{equation}
After that we define the vector $w \in V_{2p}$ by the formula:
\begin{equation}
w=2t\sum\limits_{j{=}1}^{2p}\left( (j{-}1)(2t{-}1){+}2p{-}1\right)e_jv^{(2p{-}j)}.
\end{equation}

\begin{lemma}
\begin{equation}
e_{-1}v^{(k)}=-(p{+}2{-}k{+}3t)v^{(k{-}1)}, \quad k=1,\dots, 2p{-}1.
\end{equation}
and the vector $w$ annihilates the operator $e_{-1}$:
$$
e_{-1}w=e_{-1}\left( 2t\sum\limits_{j{=}1}^{2p}\left( (j{-}1)(2t{-}1){+}2p{-}1\right)e_jv^{(2p{-}j)}\right)=0.
$$
\end{lemma}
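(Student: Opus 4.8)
The plan is to prove the displayed recursion for $e_{-1}v^{(k)}$ by induction on $k$, and then to obtain $e_{-1}w=0$ as the boundary case $k=2p$ of the very same computation carried out at the level of the numerators. First I would record the two facts that drive everything: from the bracket of the Virasoro algebra one has $[e_{-1},e_1]=2e_0$ and $[e_{-1},e_j]=(j{+}1)e_{j-1}$ for $j\ge 2$; moreover $v^{(m)}$ is homogeneous of degree $m$, so $e_0v^{(m)}=(h{+}m)v^{(m)}$, and $e_{-1}v^{(0)}=e_{-1}v=0$. I write $a_{k,j}=(j{-}1)(2t{-}1)+2k{-}2p{-}1$ for the coefficients in the recursion, $D_k=k(2p{-}k)(k{-}p{-}t)$ for its denominator, and $N_k=2t\sum_{j=1}^{k}a_{k,j}e_jv^{(k-j)}$ for its numerator, so that $v^{(k)}=N_k/D_k$.

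For the base case $k=1$ one computes $e_{-1}v^{(1)}$ directly, and the desired equality $e_{-1}v^{(1)}=-(p{+}1{+}3t)v$ reduces to the scalar identity $4th=(p{+}1{+}3t)(1{-}p{-}t)$, which is exactly where the explicit value of $h=h_{2,p}(t)$ is used. For the inductive step I would apply $e_{-1}$ to $N_k$, move $e_{-1}$ to the right through each $e_j$ with the two commutators, replace $e_0v^{(k-1)}$ by $(h{+}k{-}1)v^{(k-1)}$, and substitute the inductive hypothesis $e_{-1}v^{(m)}=-(p{+}2{-}m{+}3t)v^{(m-1)}$ for $1\le m\le k{-}1$. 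After reindexing, $e_{-1}N_k/(2t)$ splits into a single term $2a_{k,1}(h{+}k{-}1)v^{(k-1)}$ coming from the $e_0$ contribution, plus a sum $\sum_{i=1}^{k-1}E_i\,e_iv^{(k-1-i)}$ with $E_i=-a_{k,i}(p{+}2{-}k{+}i{+}3t)+a_{k,i+1}(i{+}2)$.

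The crux is then a pair of purely algebraic identities in $t$. The first, using $a_{k-1,i}=a_{k,i}-2$, is $E_i=(k{-}p{+}1{-}3t)\,a_{k-1,i}$ for every $i$; this is what makes the off-diagonal sum collapse, since $\sum_i a_{k-1,i}e_iv^{(k-1-i)}=N_{k-1}/(2t)=D_{k-1}v^{(k-1)}/(2t)$, so the whole sum becomes a scalar multiple of $v^{(k-1)}$ with no appeal to any PBW basis. The second identity, $4t\,a_{k,1}(h{+}k{-}1)+(k{-}p{+}1{-}3t)D_{k-1}=-(p{+}2{-}k{+}3t)D_k$, again invokes the explicit $h$ and combines the two contributions into $e_{-1}N_k=-(p{+}2{-}k{+}3t)D_k\,v^{(k-1)}$; dividing by $D_k$ closes the induction. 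The main obstacle is the verification of this second identity, which mixes the rational function $h$ with the triple product $D_k$, rather than anything conceptual.

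Finally, for $e_{-1}w=0$ I would observe that $w=N_{2p}$, that is, $w$ is precisely the numerator that the recursion would assign to the (undefined) vector $v^{(2p)}$. The computation above was performed on $N_k$ and used $e_{-1}v^{(m)}$ only for $m\le k{-}1\le 2p{-}1$, all of which are already established; hence the numerator identity $e_{-1}N_k=-(p{+}2{-}k{+}3t)D_k\,v^{(k-1)}$ holds for $k=2p$ as well. Since $D_{2p}=2p(2p{-}2p)(p{-}t)=0$, the right-hand side vanishes and $e_{-1}w=e_{-1}N_{2p}=0$.
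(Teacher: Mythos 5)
Your proposal is correct and follows essentially the same route as the paper: induction on $k$ for the numerator identity $e_{-1}N_k=-(p{+}2{-}k{+}3t)D_k\,v^{(k-1)}$, with the base case reducing to the scalar identity for $h$, the reindexed sum collapsing because its coefficients are proportional to those defining $v^{(k-1)}$, and $e_{-1}w=0$ obtained as the case $k=2p$ via the vanishing factor $2p{-}k$ in $D_{2p}$. (Your collapsing factor $k{-}p{+}1{-}3t$ is in fact the correct one; the factor $-(p{-}k{+}1{+}3t)$ displayed in the paper's intermediate identity is a sign slip that does not affect the final formula.)
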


\begin{proof}
We will prove the following formula for $k=1,\dots,2p$:
\begin{equation}
\begin{split}
e_{-1}\left(2t\sum\limits_{j{=}1}^k\left( (j{-}1)(2t{-}1){+}2k{-}2p{-}1\right)e_jv^{(k{-}j)}\right)=\\
=-(p{+}2{-}k{+}3t)k(2p{-}k)(k{-}p{-}t)v^{(k{-}1)}.
\end{split}
\end{equation}
We proceed by recursion on $k$. The recursion base is
$$
e_{-1}\left(2t(2{-}2p{-}1)e_1v^{(0)} \right)=2t(1{-}2p)2hv^{(0)}=-(p{+}1{+}3t)(2p{-}1)(1{-}p{-}t)v^{(0)}.
$$
The recursive step is the following calculation
\begin{equation}
\begin{split}
e_{-1}\left(2t\sum\limits_{j{=}1}^{k}\left( (j{-}1)(2t{-}1){+}2k{-}2p{-}1\right)e_jv^{(k{-}j)}\right)=\\
=2t\sum\limits_{j{=}1}^{k}\left( (j{-}1)(2t{-}1){+}2k{-}2p{-}1\right)\left((j{+}1)e_{j{-}1}v^{(k{-}j)} 
{+}e_{j}e_{-1}v^{(k{-}j)}\right)=\\
%=2t\sum\limits_{j{=}1}^{k}\left( (j{-}1)(2t{-}1){+}2k{-}2p{-}1\right)\left((j{+}1)e_{j{-}1}v^{(k{-}j)} 
%{-}(p{+}2{-}k{+}j{+}3t)e_{j}v^{(k{-}j{-}1)}\right)=\\
=4t(2k{-}2p{-}1)e_0v^{(k{-}1)}{+}2t\sum\limits_{j{=}2}^{k}\left( (j{-}1)(2t{-}1){+}2k{-}2p{-}1\right)(j{+}1)e_{j{-}1}v^{(k{-}j)}{+}\\ 
{-}2t\sum\limits_{j{=}1}^{k{-}1}\left( (j{-}1)(2t{-}1){+}2k{-}2p{-}1\right)(p{+}2{-}k{+}j{+}3t)e_{j}v^{(k{-}j{-}1)}=\\
=t(2k{-}2p{-}1)4(h{+}k{-}1)v^{(k{-}1)}{+}\\
{-}(p{-}k{+}1{+}3t)2t\sum\limits_{j{=}1}^{k{-}1}\left((j{-}1)(2t{-}1){+}2k{-}2p{-}3\right)e_{j}v^{(k{-}j{-}1)}.
\end{split}
\end{equation}
In this chain of equalities we replaced $e_{-1}e_j$ by $(j{+}1)e_{j{-}1}{+}e_je_{-1}$ and used the formula
for $e_{-1}v^{(k{-}j)}$ which we considered true for $j=1,\dots, k$ by the induction hypothesis. 

Then we shifted $j{=}j'{+}1$ the summation index in the sum
$$
\sum\limits_{j{=}2}^{k}\left( (j{-}1)(2t{-}1){+}2k{-}2p{-}1\right)(j{+}1)e_{j{-}1}v^{(k-j)}
$$
and used the following equality
\begin{equation}
\begin{split}
 (j(2t{-}1){+}2k{-}2p{-}1)(j{+}2)-\\
 -((j{-}1)(2t{-}1){+}2k{-}2p{-}1)(p{+}2{-}k{+}j{+}3t)=\\
={-}(p{-}k{+}1{+}3t)\left((j{-}1)(2t{-}1){+}2k{-}2p{-}3\right).
\end{split}
\end{equation}

It follows from the definition of $v^{(k{-}1)}$ that
\begin{equation}
\begin{split}
2t\sum\limits_{j{=}1}^{k{-}1}\left((j{-}1)(2t{-}1){+}2k{-}2p{-}3\right)e_{j}v^{(k{-}j{-}1)}=\\
=(k{-}1)(2p{-}k{+}1)(k{-}1{-}p{-}t)v^{(k{-}1)}.
\end{split}
\end{equation}
We remark that 
$$
4t(h{+}k{-}1)=\left(-(p{-}1{+}t)(p{+}1{+}3t){+}4kt{-}4t)\right)
$$
and finish our calculations.
\begin{equation}
\label{resulting_equality}
\begin{split}
e_{-1}\left(2t\sum\limits_{j{=}1}^{k}\left( (j{-}1)(2t{-}1){+}2k{-}2p{-}1\right)e_jv^{(k{-}j)}\right)=\\
=(2k{-}2p{-}1)\left({-}(p{-}1{+}t)(n{+}1{+}3t){+}4kt{-}4t\right)v^{(k{-}1)}{-}\\
{-}(p{-}k{+}1{+}3t)(k{-}1)(2p{-}k{+}1)(k{-}1{-}p{-}t)v^{(k{-}1)}=\\
=-k(2p{-}k)(k{-}p{-}t)(p{+}2{-}k{+}3t)v^{(k{-}1)}.
\end{split}
\end{equation}
If we divide the resulting equality (\ref{resulting_equality})  by $k(2p{-}k)(k{-}p{-}t)$, we obtain the required formula 
$$
e_{-1}v^{(k)}=-(p{+}2{-}k{+}3t)v^{(k-1)}.
$$ 

Taking $k=2p$ we will get 
$$
e_{-1}w=0.
$$
\end{proof}

\begin{lemma}
\begin{equation}
e_{-2}v^{(1)}=0, \quad e_{-2}v^{(k)}=-(p{+}4{-}k{+}5t)v^{(k{-}2)}, \quad k=2,\dots, 2p{-}1.
\end{equation}
and the vector $w$ annihilates the operator $e_{-2}$:
$$
e_{-2}w=e_{-2}\left( 2t\sum\limits_{j{=}1}^{2p}\left( (j{-}1)(2t{-}1){-}1\right)e_jv^{(2p{-}j)}\right)=0.
$$
\end{lemma}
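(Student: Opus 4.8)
The plan is to follow the proof of the preceding lemma verbatim, with $e_{-1}$ replaced by $e_{-2}$. Write
\[ N_k=2t\sum_{j=1}^{k}c_j\,e_jv^{(k-j)},\qquad c_j=(j-1)(2t-1)+2k-2p-1, \]
so that $v^{(k)}=N_k/\bigl(k(2p-k)(k-p-t)\bigr)$ for $1\le k\le 2p-1$ and $w=N_{2p}$. I would prove, by strong induction on $k$, the single master identity
\[ e_{-2}N_k=-(p+4-k+5t)\,k(2p-k)(k-p-t)\,v^{(k-2)},\qquad k=2,\dots,2p. \]
Dividing by $k(2p-k)(k-p-t)$ gives the asserted formula for $e_{-2}v^{(k)}$ when $k\le 2p-1$; at $k=2p$ the factor $(2p-k)$ annihilates the right-hand side, which is exactly $e_{-2}w=0$. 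The remaining assertion $e_{-2}v^{(1)}=0$ and the induction base are immediate: $v^{(1)}$ is a multiple of $e_1v$, and since $[e_{-2},e_1]=3e_{-1}$ one has $e_{-2}v^{(1)}=3e_{-1}v+e_1e_{-2}v=0$, using $e_{-1}v=e_{-2}v=0$.

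For the inductive step I would commute $e_{-2}$ to the right through each $e_j$ by
\[ e_{-2}e_j=e_je_{-2}+(j+2)e_{j-2}+\tfrac12\,\delta_{j,2}\,z, \]
the central term surviving only at $j=2$ because $\tfrac{j^{3}-j}{12}=\tfrac12$ there. Applied to $N_k$ this produces three kinds of contributions. The terms $e_je_{-2}v^{(k-j)}$ are handled by the induction hypothesis $e_{-2}v^{(k-j)}=-(p+4-k+j+5t)v^{(k-j-2)}$ together with the boundary values $e_{-2}v^{(1)}=e_{-2}v^{(0)}=0$. The terms $(j+2)e_{j-2}v^{(k-j)}$ split off two scalar pieces: the value $j=1$ gives $3c_1\,e_{-1}v^{(k-1)}$, which the \emph{preceding} lemma turns into $-3c_1(p+3-k+3t)v^{(k-2)}$, while $j=2$ gives $4c_2\,e_0v^{(k-2)}=4c_2(h+k-2)v^{(k-2)}$. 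Finally the central term contributes $\tfrac{c}{2}c_2\,v^{(k-2)}$.

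Next I would reindex the remaining part $\sum_{j\ge 3}c_j(j+2)e_{j-2}v^{(k-j)}$ by $j\mapsto j+2$ and merge it with the induction sum $-\sum_{j=1}^{k-2}c_j(p+4-k+j+5t)e_jv^{(k-j-2)}$; both run over the monomials $e_iv^{(k-2-i)}$, $1\le i\le k-2$, occurring in $N_{k-2}$. The combined coefficient of $e_iv^{(k-2-i)}$ is $c_{i+2}(i+4)-c_i(p+4-k+i+5t)$, and the heart of the argument is the polynomial identity (the analogue of the one used for $e_{-1}$)
\[ c_{i+2}(i+4)-c_i\,(p+4-k+i+5t)=(k-p+2-5t)\bigl((i-1)(2t-1)+2(k-2)-2p-1\bigr), \]
which one verifies as an equality of linear functions of $i$: the coefficient of $i$ matches automatically, and the constant terms agree after substituting $2t-1$ and $2k-2p-1$ for the abbreviations in $c_j$. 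This collapses the merged sum, so that its contribution to $e_{-2}N_k$ equals $(k-p+2-5t)\,N_{k-2}=(k-p+2-5t)(k-2)(2p-k+2)(k-2-p-t)\,v^{(k-2)}$, a clean multiple of $v^{(k-2)}$.

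At this point every contribution is a scalar multiple of $v^{(k-2)}$, and it remains to add them and to check that the resulting scalar equals $-(p+4-k+5t)k(2p-k)(k-p-t)$. Substituting the prescribed $c=13+6t+6t^{-1}$ and $h=-\tfrac{(p-1+t)\bigl(t^{-1}(p+1)+3\bigr)}{4}$ reduces this to a single polynomial identity in $p,k,t$. This last identity is the main obstacle, and it is genuinely new compared with the $e_{-1}$ lemma: there the collapse used only $h$, whereas now the $j=2$ term forces the central charge $c$ into the computation, so it is precisely the tuning of the pair $(h,c)$ to $(p,t)$ that must conspire to produce the claimed coefficient. Once this scalar identity is verified the induction closes, and specializing the master identity to $k=2p$ (where $2p-k=0$) yields $e_{-2}w=0$.
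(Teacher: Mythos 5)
Your plan reproduces the paper's own proof essentially verbatim: the same master identity for $e_{-2}$ applied to $N_k=2t\sum_j c_j e_j v^{(k-j)}$, the same commutation $e_{-2}e_j=e_je_{-2}+(j+2)e_{j-2}+\tfrac12\delta_{j,2}z$, the same use of the preceding lemma for the $j=1$ term and of $e_0$, $z$ for the $j=2$ term, the same reindexing and coefficient collapse, and the same concluding scalar identity in $(p,k,t)$ (which the paper likewise only asserts rather than expands, and which does hold). If anything, your collapse identity $c_{i+2}(i+4)-c_i(p+4-k+i+5t)=(k-p+2-5t)\bigl((i-1)(2t-1)+2k-2p-5\bigr)$ and your factor $(k-2-p-t)$ in $N_{k-2}=(k-2)(2p-k+2)(k-2-p-t)v^{(k-2)}$ are the correct forms of the corresponding displays in the paper, whose printed version writes $c_j$ where $c_{j+2}$ is meant in the shifted term and $(k-p-t)$ where $(k-2-p-t)$ is meant in the collapsed sum.
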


\begin{proof}
We recursively prove this formula for $k=2,\dots,2p$:
\begin{equation}
\begin{split}
e_{-2}\left(2t\sum\limits_{j{=}1}^k\left( (j{-}1)(2t{-}1){+}2k{-}2p{-}1\right)e_jv^{(k{-}j)}\right)=\\
=-(p{+}4{-}k{+}5t)k(2p{-}k)(k{-}p{-}t)v^{(k{-}2)}.
\end{split}
\end{equation}
The starting point $k{=}1$ is evident
$$
e_{-2}\left(2t(2{-}2p{-}1)e_1v^{(0)} \right)=2t(1{-}2p) (3e_{-1}v^{(0)}+e_1e_{-2}v^{(0)})=0.
$$
Now we take $k{=}2$.
\begin{equation}
\begin{split}
e_{-2}\left(2t(2t{+}2k{-}2p{-}2)e_2v^{(0)} {+} 2t(3{-}2p)e_1v^{(1)}\right)=\\
=2t(3{-}2p)3e_{-1}v^{(1)}{+}2t(2t+2-2p)(4e_0{+}\frac{1}{2}z)v^{(0)}=\\
=2t\left(-3(3{-}2p)(p{+}1{+}3t){+}2(t{+}1{-}p)(4h{+}\frac{13}{2}{+}3t{+}3t^{-1})\right)v^{(0)}=\\
=-(p{+}2{+}5t)2(2p{-}2)(2{-}p{-}t)v^{(0)}.
\end{split}
\end{equation}

Now let consider the recursive step.
\begin{equation}
\begin{split}
e_{-2}\left(2t\sum\limits_{j{=}1}^{k}\left( (j{-}1)(2t{-}1){+}2k{-}2p{-}1\right)e_jv^{(k{-}j)}\right)=\\
=2t\sum\limits_{j{=}1}^{k}\left( (j{-}1)(2t{-}1){+}2k{-}2p{-}1\right)\left((j{+}2)e_{j{-}2}v^{(k{-}j)} 
{+}e_{j}e_{-2}v^{(k{-}j)}\right)=\\
{=}6t(2k{-}2p{-}1)e_{-1}v^{(k{-}1)}{+}4t(t{+}1{-}p)(4(h{+}k{-}2){+}\frac{13}{2}{+}3t{+}3t^{-1})v^{(k{-}2)}{+}\\
+2t\sum\limits_{j{=}3}^{k}\left( (j{-}1)(2t{-}1){+}2k{-}2p{-}1\right)(j{+}2)e_{j{-}2}v^{(k{-}j)}-\\ 
{-}2t\sum\limits_{j{=}1}^{k{-}2}\left( (j{-}1)(2t{-}1){+}2k{-}2p{-}1\right)(p{+}4{-}k{+}j{+}5t)e_{j}v^{(k{-}j{-}2)}=
\end{split}
\end{equation}
We used the induction assumption 
for $e_{-2}v^{(k{-}j)}, j=1,\dots, k{-}2$.

Now we shift the summation index $j'=j{-}2$ in the sum
$$
\sum\limits_{j{=}3}^{k}\left( (j{-}1)(2t{-}1){+}2k{-}2p{-}1\right)(j{+}2)e_{j{-}1}v^{(k-j)},
$$
replace $3e_{-1}v^{(k{-}1)}$ by $-(p{+}3{-}k{+}3t)v^{(k{-}2)}$
and we have 

\begin{equation}
\begin{split}
e_{-2}\left(2t\sum\limits_{j{=}1}^{k}\left( (j{-}1)(2t{-}1){+}2k{-}2p{-}1\right)e_jv^{(k{-}j)}\right)=\\
=2tQv^{(k{-}2)}+
2t\sum\limits_{j{=}1}^{k{-}2}R_je_jv^{(k{-}j{-}2)}
\end{split}
\end{equation}
where
$$
Q{=}{-}3(2k{-}2p{-}1)(p{+}3{-}k{+}3t){+}2(t{+}k{-}p{-}1)\left( 4h{+}4k{-}8{+}\frac{13}{2}{+}3t{+}3t^{-1}\right)
$$
and now we compute the coefficient $R_j$
\begin{equation}
\begin{split}
R_j{=}{-}\left( (j{-}1)(2t{-}1){+}2k{-}2p{-}1\right)(p{+}4{-}k{+}j{+}5t){+}\\
{+}(4{+}j)\left( (j{-}1)(2t{-}1){+}2k{-}2p{-}1\right)=\\
=-(p{-}k{-}2{+}5t)\left((j{-}1)(2t{-}1){+}2k{-}2p{-}5\right)
\end{split}
\end{equation}
The first factor of $R_j$ does not depend on $j$ and moreover
\begin{equation}
\begin{split}
2t\sum\limits_{j{=}1}^{k{-}2}R_je_jv^{(k{-}j{-}2)}=-(p{-}k{-}2{+}5t)(k{-}2)(2p{-}k{+}2)(k{-}p{-}t)v^{(k{-}2)}
\end{split}
\end{equation}
Hence
\begin{equation}
\begin{split}
\label{resultformul}
e_{-2}\left(2t\sum\limits_{j{=}1}^{k}\left( (j{-}1)(2t{-}1){+}2k{-}2p{-}1\right)e_jv^{(k{-}j)}\right)=\\
=\left(2tQ{-}(p{-}k{-}2{+}5t)(k{-}2)(2p{-}k{+}2)(k{-}p{-}t)\right)v^{(k{-}2)}{=}\\
=-k(2p{-}k)(k{-}p{-}t)(p{+}4{-}k{+}5t)v^{(k{-}2)}.
\end{split}
\end{equation}

We divide the resulting equality by $k(2p{-}k)(k{-}p{-}t)$ and obtain the required formula.

Taking $k=2p$ in (\ref{resultformul}) we have 
$$
e_{-2}w=0.
$$
This completes the proof of the lemma.
\end{proof}
Proof of the theorem follows from two lemmas. The only one thing we still have to do is to present
explicit formula for $w$.
\begin{equation}
\begin{split}
w=2t\sum\limits_{j_1{=}1}^{2p}\left( (j_1{-}1)(2t{-}1){+}2p{-}1\right)e_{j_1}v^{(2p{-}j_1)},\\
v^{(2p{-}j_1)}{=}\sum\limits_{j_2{=}1}^{2p{-}j_1}\frac{2t \left( (j_2{-}1)(2t{-}1){+}2p{-}2j_1{-}1\right)}{(2p{-}j_1)j_1(p{-}j_1{-}t)}{e_{j_2}}{v^{(2p{-}j_1{-}j_2)}},\\
v^{(2p{-}j_1{-}j_2)}{=}\sum\limits_{j_3{=}1}^{2p{-}j_1{-}j_2}\frac{2t \left( (j_3{-}1)(2t{-}1){+}2p{-}2j_1{-}2j_2{-}1\right)}{(2p{-}j_1{-}j_2)(j_1{+}j_2)(p{-}j_1{-}j_2{-}t)}{e_{j_3}}{v^{(2p{-}j_1{-}j_2{-}j_3)}}.
\end{split}
\end{equation}
Proceeding further step by step we obtain the formula
\begin{equation}
\begin{split}
w={\sum_{\begin{array}{c}j_1,\dots,j_s\\j_1{+}{\dots}{+}j_s{=}2p
\end{array}.}}
\frac{(2t)^s\prod\limits_{r=1}^{s}\left((j_r{-}1)(2t{-}1){+}2p{-}1{-}2\sum\limits_{q=1}^{r{-}1}j_q) \right)}{\prod\limits_{m=1}^{s-1}\left((\sum\limits_{q=1}^{m}j_q)(2p{-}\sum\limits_{q=1}^{m}j_q)(p{-}t{-}\sum\limits_{q=1}^{m}j_q)\right)}e_{j_1}\dots
e_{j_s}v.
\end{split}
\end{equation}
We need to calculate the coefficient  facing $e_1^{2p}$ in the expansion of $w$:
$$
w
=\frac{(2t)^{2p}\prod\limits_{k=0}^{2p{-}1}(2p{-}1{-}2k)}{(2p{-}1)!^2\prod\limits_{q{=}1}^{2p{-}1}(p{-}t{-}q)}
e_1^{2p}v+\dots.
$$
Finally we get
$$
S_{2,p}(t)v=\frac{(2p{-}1)!^2\prod\limits_{q{=}1}^{2p{-}1}(p{-}t{-}q)}{(2t)^{2p}\prod\limits_{k=0}^{2p{-}1}(2p{-}1{-}2k)}w.
$$
\end{proof}
\begin{example}
\begin{equation}
\begin{split}
S_{2,3}(t)=e_1^6{+}\frac{(4{-}t^2)}{3t}(e_1^4e_2{+}e_2e_1^4){+}
\frac{8(1{-}t^2)}{3t}(e_1^3e_2e_1{+}e_1e_2e_1^3){+}9te_1^2e_2e_1^2{+}\\ 
{+}3(4{-}t^2)(e_1^2e_2^2{+}e_2^2e_1^2){+}\frac{64(1{-}t^2)^2}{9t^2}e_1e_2^2e_1{+}\frac{(4{-}t^2)^2}{9t^2}e_2e_1^2e_2{+}\\
{+}\frac{8(1{-}t^2)(4{-}t^2)}{9t^2}(e_1e_2e_1e_2{+}e_2e_1e_2e_1){+}\frac{(4{-}t^2)^2}{t}e_2^3{+}\\
{+}\frac{4(4{-}t^2)(1{-}t^2)(9{-}16t^2)}{9t^4}e_3^2{+}\frac{6(1{+}t)(4t{-}1)}{t}e_1^2e_3e_1{+}
\frac{6(1{-}t)(4t{+}1)}{t}e_1e_3e_1^2{+}\\
{+}\frac{2(1{+}t)(2{+}t)(3{-}4t)}{3t^2}e_1^3e_3{+}\frac{2(1{-}t)(2{-}t)(3{+}4t)}{3t^2}e_3e_1^3{+}\\
{+}\frac{16(1{-}t^2)(1{+}t)(2{+}t)(3{-}4t)}{9t^3}e_1e_2e_3{+}\frac{16(1{-}t^2)(1{-}t)(2{-}t)(3{+}4t)}{9t^3}e_3e_2e_1{+}\\
{+}\frac{2(4{-}t^2)(2{+}t)(1{+}t)(3{-}4t)}{9t^3}e_2e_1e_3{+}\frac{2(4{-}t^2)(2{-}t)(1{-}t)(3{+}4t)}{9t^3}e_3e_1e_2{+}\\
{+}\frac{2(4{-}t^2)(1{-}t)(4t{+}1)}{t^2}e_1e_3e_2{+}\frac{2(4{-}t^2)(1{+}t)(4t{-}1)}{t^2}e_2e_3e_1{+}\\
{+}\frac{6(1{+}t)(2{+}t)(3t{-}1)}{t^2}e_1^2e_4{+}\frac{48(1{-}t^2)}{t}e_1e_4e_1{+}
\frac{6(1{-}t)(2{-}t)(3t{+}1)}{t^2}e_4e_1^2{+}\\
{+}\frac{(4{-}t^2)(2{+}t)(1{+}t)(3t{-}1)}{t^3}e_2e_4{+}\frac{(4{-}t^2)(2{-}t)(1{-}t)(3t{+}1)}{t^3}e_4e_2{+}\\
{+}\frac{4(1{-}t^2)(2{+}t)(8t{-}1)}{t^3}e_1e_5{+}\frac{4(1{-}t^2)(2{-}t)(8t{+}1)}{t^3}e_5e_1{+}\frac{20(1{-}t^2)(4{-}t^2)}{t^3}e_6.
\end{split}
\end{equation}
\end{example}

Obviously, coefficients $f_p(i_1,\dots, i_s)$ facing monomials $e_{i_1}\dots e_{i_s}$ (that correspond to unordered partitions of $2p$) in the expansion of $S_{2,p}(t)$ are not uniquely defined, because these monomials are linearily dependent  in $V$. There are, however, two coefficients $f_p(1,\dots,1)$ and $f_p(2,\dots, 2)$ facing $e_1^{2p}$ and $e_2^p$ respectively which, as it is easily seen, are uniquely determined. Let us calculate $f_p(2,\dots,2)$.
$$
f_p(2,\dots, 2)=\frac{\prod\limits_{q=1}^{p}\left(t^2{-}(p{+}1{-}2q)^2\right)}{t^p}.
$$
In particular, in our two previous examples we have
$$
f_2(2,2)=\frac{(t^2{-}1)^2}{t^2}, \quad f_3(2,2,2)=\frac{(t^2{-}4)^2t^2}{t^3}.
$$
It was proved in \cite{AstFu} that
$$
S_{k,l}(t)=(k{-}1)!^{2l}e_k^lt^{(k{-}1)l}+\dots+(l{-}1)!^{2k} e_l^kt^{-(l{-}1)k},
$$
where "$\dots$" denotes intermediate degrees in $t$.
we see that our coefficient $f_p(2,\dots,2)$ has prescribed asymptotic behavior with respect to $t$.
Hence 
$$
S_{2,p}(t)=e_2^pt^{p}+\dots
$$
which is consistent with our calculations.

\section{Singular vectors and cohomology}
Now we are going to consider Verma modules over the Virasoro algebra 
with $c=0$ that one can consider as Verma modules over the Witt algebra.

\begin{proposition}[Kac \cite{Kac}, Feigin and Fuchs \cite{FeFu, FeFu2}]
There is a singular vector $w_n$ in the homogeneous subspace
$V_n(0,0)$ of the Verma module $V(0,0)$ then and only then when
$n$ is equal to some pentagonal number $n=e_{\pm}(k)=\frac{3k^2\pm
k}{2}$.
\end{proposition}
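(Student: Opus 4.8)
The plan is to reduce the statement to an elementary arithmetic condition via the Kac Theorem \ref{Kac_theorem}, and then to read off the generalized pentagonal numbers. First I would specialize to $V(0,0)$, i.e.\ impose $c=0$ and $h=0$. The equation $c(t)=13+6t+6t^{-1}=0$ clears to $6t^2+13t+6=0$, which factors as $(2t+3)(3t+2)=0$, so $t=-\tfrac32$ or $t=-\tfrac23$; since these two roots are reciprocal and $h_{p,q}(t^{-1})=h_{q,p}(t)$, it suffices to work with $t=-\tfrac32$ (the other root merely interchanges the roles of $p$ and $q$). Substituting $t=-\tfrac32$ into $h_{p,q}(t)$ and clearing denominators gives the clean identity
$$
h_{p,q}\!\left(-\tfrac32\right)=\frac{(3p-2q)^2-1}{24},
$$
which is the computational heart of the argument.

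Next I would translate ``there is a singular vector $w_n\in V_n(0,0)$'' into the existence of an embedding $V(n,0)\hookrightarrow V(0,0)$, i.e.\ into the condition that $n=h_{p,q}(-\tfrac32)$ be a non-negative integer for some positive integers $p,q$. A homogeneous singular vector of degree $n$ generates a submodule isomorphic to $V(n,0)$, and by the Feigin--Fuchs description of Verma-module embeddings at the degenerate value $c=0$ such a submodule occurs exactly at the weights $n=h_{p,q}(-\tfrac32)\in\Z_{\ge 0}$. For the sufficiency direction I would build the vector explicitly by composing the elementary embeddings realized by the singular operators $S_{p,q}(-\tfrac32)$ of this paper: the primitive singular vectors sit where $h_{p,q}(-\tfrac32)=0$, i.e.\ $3p-2q=\pm1$ (degrees $1,2,12,15,\dots$), while the remaining pentagonal degrees arise as singular vectors of the submodules $V(1,0)$ and $V(2,0)$, obtained by iterating the construction along the two interleaved chains underlying the resolution (for example a singular vector of $V(1,0)$ at level $4$ sits at degree $5$ in $V(0,0)$).

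It then remains to solve the arithmetic. Writing $m=3p-2q$, the number $n=(m^2-1)/24$ lies in $\Z_{\ge 0}$ precisely when $24\mid m^2-1$, i.e.\ when $\gcd(m,6)=1$, i.e.\ when $m=6k\pm1$ for some $k\ge1$; and then $n=\tfrac{(6k\pm1)^2-1}{24}=\tfrac{3k^2\pm k}{2}=e_{\pm}(k)$. Conversely every $m$ coprime to $6$ is representable as $3p-2q$ with $p,q\ge1$ (since $\gcd(3,2)=1$, one shifts $(p,q)\mapsto(p+2s,q+3s)$ to make both coordinates positive), so every generalized pentagonal number is attained. The step I expect to be the main obstacle is the structural equivalence in the second paragraph: the Kac determinant on $V_n(0,0)$ vanishes for \emph{every} $n\ge1$ (the factor $h-h_{1,1}(-\tfrac32)=h$ already vanishes), so it cannot by itself separate the pentagonal degrees; one genuinely needs the finer embedding diagram at $c=0$ to prove that the secondary degrees such as $5$ and $7$ carry honest singular vectors while intermediate degrees such as $3,4,6$ (where $(3p-2q)^2=73,97,145$ are not perfect squares) do not.
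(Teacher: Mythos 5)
The paper does not actually prove this proposition: it is quoted from Kac and Feigin--Fuchs, and the surrounding text only illustrates it (the explicit vectors $e_1v$ and $\left(e_1^2-\frac{2}{3}e_2\right)v$, then $w_5$, $w_7$, $w_{12}$, $w_{15}$) before using it to build the resolution. So there is no in-paper argument to compare against; judged on its own, your reduction is the standard one and the computations check out: $6t^2+13t+6=(2t+3)(3t+2)$, $h_{p,q}\left(-\tfrac{3}{2}\right)=\frac{(3p-2q)^2-1}{24}$, and $\frac{m^2-1}{24}\in\Z_{\ge 0}$ iff $\gcd(m,6)=1$ iff $m=6k\pm 1$, which yields exactly $e_{\pm}(k)$. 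Your sufficiency argument also works and matches what the paper does implicitly: the primitive singular vectors ($3p-2q=\pm1$) account for the odd-indexed pentagonal degrees $1,2,12,15,35,\dots$, and a single round of iteration inside $V(1)$ and $V(2)$ (where $h_{p,q}\left(-\tfrac{3}{2}\right)=1$ or $2$ forces $3p-2q=\pm5$ or $\pm7$) produces the even-indexed ones $5,7,22,26,\dots$, consistent with $w_5=S_{1,4}w_1=S_{3,1}w_2$ in the text.

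The one genuine soft spot is the necessity direction, and you have correctly located it yourself. The assertion that ``such a submodule occurs exactly at the weights $n=h_{p,q}\left(-\tfrac{3}{2}\right)\in\Z_{\ge0}$'' is not a consequence of Theorem \ref{Kac_theorem}: the condition $n=h_{p,q}\left(-\tfrac{3}{2}\right)$ says that $V(n,0)$ is \emph{reducible}, not that it \emph{embeds} in $V(0,0)$; conversely, a hypothetical singular vector of $V(0,0)$ at level $3$ would simply generate a copy of the irreducible module $V(3,0)$, which the determinant formula does not forbid. For the same reason, your observation that $73$, $97$, $145$ are not squares shows only that $V(3,0)$, $V(4,0)$, $V(6,0)$ are irreducible --- it does not by itself exclude singular vectors at levels $3,4,6$. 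Ruling out the non-pentagonal levels genuinely requires the Feigin--Fuchs embedding diagram at $c=0$ (equivalently, the corank count for the contravariant form, or Goncharova's theorem fed back through the resolution), which you import rather than prove. Since the proposition is itself attributed to Feigin--Fuchs, that citation is a legitimate way to close the argument, but it should be displayed as the load-bearing external input rather than folded into the word ``exactly''.
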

It follows from the theorem  \ref{Kac_theorem} that if a Verma
module $V(h,0)$ (with $c{=}0$) has a singular vector $w_{p,q}(t)$ 
it  implies that $t{=}{-}\frac{3}{2}$ or $t{=}{-}\frac{2}{3}$. We will
fix the value $t=-\frac{3}{2}$ and we will write $S_{p,q}$ instead
of $S_{p,q}\left({-}\frac{3}{2}\right)$ for convenience in the
notations. Let us denote by $V\left(\frac{3k^2\pm k}{2}\right)$ a
submodule in the Verma module $V(0,0)$ generated by a singular
vector with the grading $\frac{3k^2\pm k}{2}$. The submodule
$V\left(\frac{3k^2\pm k}{2}\right)$ is isomorphic to the Verma
module $V\left(\frac{3k^2\pm k}{2},0\right)$.
\begin{proposition}[\cite{RochWall}, \cite{FeFu}]
The system of submodules  $V\left(\frac{3k^2\pm k}{2}\right)$ has
the following important properties:

1) the sum $V(1)+V(2)$ is the subspace of codimension one in
 $V(0)$;

2) $V\left(\frac{3k^2- k}{2}\right)\cap V\left(\frac{3k^2+
k}{2}\right)=V\left(\frac{3(k{+}1)^2{-} (k{+}1)}{2}\right)+
V\left(\frac{3(k{+}1)^2{+} (k{+}1)}{2}\right),k {\ge} 1$.
\end{proposition}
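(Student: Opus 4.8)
The plan is to read the inclusion ``$\supseteq$'' of Property 2 off the explicit singular vectors constructed in this paper, to derive Property 1 from the full Property 2 by a character computation governed by Euler's pentagonal number theorem, and to isolate the reverse inclusion of Property 2 as the one genuinely structural step. First I would record the arithmetic that aligns the two families. Substituting $t=-\tfrac32$ into the Kac weight of Theorem \ref{Kac_theorem} gives
$$
h_{p,q}\!\left(-\tfrac32\right)=\frac{(3p-2q)^2-1}{24},
$$
so a degenerate weight equals a pentagonal number $\tfrac{3k^2\pm k}{2}$ exactly when $|3p-2q|=6k\pm1$; in particular each $V\!\left(\tfrac{3k^2\pm k}{2}\right)$ is itself a degenerate Verma module $V\!\left(\tfrac{3k^2\pm k}{2},0\right)$. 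Now compute the relative gradings of the four desired embeddings: inside $V(e_-(k))$ a singular vector of degree $3k+1$ lands on the weight $e_-(k{+}1)$ and one of degree $4k+2$ lands on $e_+(k{+}1)$, while inside $V(e_+(k))$ the degrees $2k+1$ and $3k+2$ land on $e_-(k{+}1)$ and $e_+(k{+}1)$ respectively. These four degrees are precisely the degrees of $S_{1,3k+1}$, $S_{2k+1,2}$, $S_{2k+1,1}$, $S_{1,3k+2}$, the entries of $\delta_k$: three are given by the Benoit--Saint-Aubin formula, and the fourth is $S_{2k+1,2}(-\tfrac32)=S_{2,2k+1}(-\tfrac23)$, supplied by Theorem \ref{main_theorem}. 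Each is singular by Theorem \ref{Kac_theorem}, so all four submodules $V(e_\pm(k{+}1))$ embed into both $V(e_-(k))$ and $V(e_+(k))$, which yields
$$
V(e_-(k{+}1))+V(e_+(k{+}1))\ \subseteq\ V(e_-(k))\cap V(e_+(k)).
$$

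Second, granting the full Property 2, I would obtain Property 1 from characters. Because a nonzero homomorphism of Virasoro Verma modules is injective, every submodule here is free, with $\ch V(e_\pm(k))=q^{e_\pm(k)}/\varphi(q)$ where $\varphi(q)=\prod_{m\ge1}(1-q^m)$. Applying $\ch(A+B)=\ch A+\ch B-\ch(A\cap B)$ in each grading and using Property 2 recursively (only finitely many terms contribute in a fixed degree) telescopes to
$$
\ch\bigl(V(1)+V(2)\bigr)=\sum_{k\ge1}(-1)^{k-1}\frac{q^{e_-(k)}+q^{e_+(k)}}{\varphi(q)}.
$$
Euler's identity $\varphi(q)=1+\sum_{k\ge1}(-1)^k\bigl(q^{e_-(k)}+q^{e_+(k)}\bigr)$ then gives
$$
\ch V(0)-\ch\bigl(V(1)+V(2)\bigr)=\frac{1}{\varphi(q)}\Bigl(1+\sum_{k\ge1}(-1)^k\bigl(q^{e_-(k)}+q^{e_+(k)}\bigr)\Bigr)=\frac{\varphi(q)}{\varphi(q)}=1,
$$
i.e. the quotient sits in degree $0$ and is one-dimensional, which is Property 1. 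This matches the independent fact that $V(0,0)/(V(1)+V(2))$ is the trivial module: the level-one and level-two singular vectors force $e_1\bar v=e_2\bar v=0$ in the quotient, and since $L_1$ is generated by $e_1$ and $e_2$ the quotient is spanned by $\bar v$.

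The main obstacle is the reverse inclusion of Property 2, namely $V(e_-(k))\cap V(e_+(k))\subseteq V(e_-(k{+}1))+V(e_+(k{+}1))$, together with the exactness that makes the telescoping above a proof rather than a consistency check. This cannot follow from exhibiting singular vectors alone; it is equivalent to the assertion that the embedding poset of the degenerate Verma modules on $|3p-2q|=6k\pm1$ is exactly the staircase, with no further embeddings. I would supply this from the Feigin--Fuchs embedding theorem at $c=0$, equivalently from the statement that each quotient $V(e_\pm(k))/\bigl(V(e_-(k{+}1))+V(e_+(k{+}1))\bigr)$ is the irreducible module $L(e_\pm(k),0)$ whose character is the corresponding signed pentagonal sum; the character computation above then pins the codimension at exactly $1$. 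The structural embedding input is where the real difficulty lies.
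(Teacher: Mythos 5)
The paper does not prove this proposition at all: it is quoted verbatim from Rocha-Carridi--Wallach and Feigin--Fuchs, and the surrounding text only illustrates it with the low-level identities $w_5=S_{1,4}w_1=S_{3,1}w_2$, $w_7=S_{3,2}w_1=S_{1,5}v$, etc. So there is no in-paper argument to compare against; judged on its own, your reconstruction is sound where it is carried out and honest about where it is not. Your arithmetic is correct: $h_{p,q}(-\tfrac32)=\frac{(3p-2q)^2-1}{24}$, the four relative degrees $3k{+}1$, $4k{+}2$, $2k{+}1$, $3k{+}2$ match $e_\pm(k{+}1)-e_\pm(k)$, and the corresponding pairs $(p,q)$ all satisfy $|3p-2q|=6k\mp1$, so the four singular vectors exist and give the inclusion $V(e_-(k{+}1))+V(e_+(k{+}1))\subseteq V(e_-(k))\cap V(e_+(k))$. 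One step you use silently, as does the paper (``Hence they coincide''), is that the space of singular vectors of $V(0,0)$ at any fixed level is at most one-dimensional; without it the submodule generated by the degree-$(3k{+}1)$ singular vector of $V(e_-(k))$ need not literally equal $V(e_-(k{+}1))$. For Property 1 you give two arguments; note that the second (the quotient is killed by $e_1$ and $e_2$, which generate $L_1$) is self-contained and does not need Property 2 or Euler's identity, so it is the cleaner route, while the character telescoping is really the converse direction --- the consistency of Property 2 with Goncharova's theorem. Also, the identity $\dim(A+B)=\dim A+\dim B-\dim(A\cap B)$ holds gradedwise for arbitrary subspaces, so no ``exactness'' is needed to make the telescoping legitimate once Property 2 is granted. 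Finally, you correctly isolate the genuine content --- the reverse inclusion $V(e_-(k))\cap V(e_+(k))\subseteq V(e_-(k{+}1))+V(e_+(k{+}1))$, i.e.\ that the $c=0$ embedding diagram is exactly the staircase --- and defer it to the Feigin--Fuchs embedding theorem; that is precisely the step the paper also takes on faith from \cite{RochWall} and \cite{FeFu}, so your proposal is as complete as the source it is reconstructing, but it is not a proof from scratch.
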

One can directly verify that the vectors  $e_1v$ and
$\left(e_1^2-\frac{2}{3}e_2\right)v$ are singular in the module
 $V(0,0)$ with the gradings  $1$ and $2$ respectively. Let consider
a submodule $V(1)$ generated by $w_1{=}e_1v$. It is isomorphic to the Verma module
$V(1,0)$ and it containes a singular vector 
$S_{1,4}w_1$. A Verma module $V(2,0) = V(2)$ (generated by the vector $w_2{=}S_{1,2}v$)
containes a singular vector 
$S_{3,1}w_2.$ 
Vectors $S_{1,4}w_1$ and $S_{3,1}w_2$  are both singular and they at the level $n{=}5$ in the Verma module $V(0,0)$. Hence they coincide
$$
w_5=S_{1,4}w_1=S_{3,1}w_2.
$$
Similarly, one can check the other equality
$$
w_7=S_{3,2}w_1=S_{1,5}.
$$
We see that singular vectors $w_5, w_7 \in V(1)\cap V(2)$ as well as the sum
$V(5) + V(7)$ of submodules generated by $w_5$ and $w_7$ respectively:
$$
V(5) + V(7) \subset  V(1)\cap V(2).
$$
The intersection $V(5) \cap V(7)$ containes two singular vectors
$$
w_{12}=S_{1,7}w_5=S_{5,1}w_7, \quad
w_{15}=S_{5,2}w_5=S_{1,8}w_7.
$$
The inclusions of  submodules $V\left(\frac{3k^2\pm k}{2}\right)$
provides us with an exact sequence  \cite{RochWall, FeFu, FeFu2}:
\begin{equation}
\label{resolution}
\begin{split}
\begin{CD}
{\dots}{\rightarrow} {V(\frac{3(k{+}1)^2{-}
(k{+}1)}{2}){\oplus}V(\frac{3(k{+}1)^2{+} (k{+}1)}{2})}
@>{\delta_{k{+}1}}>> V(\frac{3k^2{-}
k}{2}){\oplus}V(\frac{3k^2{+} k}{2}){\rightarrow}{\dots}\end{CD}\\
\begin{CD}
{\dots} @>{\delta_3}>> V(5){\oplus}V(7) @>{\delta_2}>>
V(1){\oplus}V(2) @>{\delta_1}>> V(0) @>{\varepsilon}>>
{\mathbb C} {\to} 0
\end{CD},
\end{split}
\end{equation}
where $\delta_k$ are defined with the aid of operators $S_{p,q}
\in U(L_1)$:
\begin{equation}
\begin{split}
\delta_{k+1}\left( \begin{array}{c}x \\y
\end{array}\right)=\left(\begin{array}{cc}S_{1,3k{+}1} & S_{2k{+}1,2}\\
{-}S_{2k+1,1}& {-}S_{1,3k{+}2} \end{array}\right)\left(
\begin{array}{c}x \\y \end{array}
\right), \; k \ge 1;\\
\delta_1\left(
\begin{array}{c}x \\y \end{array}\right)=\left(S_{1,1}, S_{1,2}\right)\left(
\begin{array}{c}x \\y \end{array}\right),
\end{split}
\end{equation}
and $\varepsilon$ is a projection to the one-dimensional ${\mathbb
C}$-submodule generated by the vector $v$.
\begin{theorem}[\cite{RochWall}, \cite{FeFu}]
The exact sequence (\ref{resolution}) considered as a sequence of
$L_1$-modules is a free resolution of the one-dimensional trivial
$L_1$-module ${\mathbb C}$.
\end{theorem}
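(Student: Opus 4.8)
The plan is to deduce everything from the submodule structure recorded in the preceding Proposition, so that the genuinely representation-theoretic input is already in hand and only a formal Mayer--Vietoris bookkeeping remains. Write $A_k=V(\frac{3k^2-k}{2})$ and $B_k=V(\frac{3k^2+k}{2})$, regarded as honest submodules of $V(0)$, and let $\iota$ denote any of the resulting inclusions into $V(0)$; each such $\iota$ is injective. Freeness of the terms is immediate: every Verma module is free of rank one over $U(L_1)$ by definition, so $V(0)$ is free of rank one and each $A_k\oplus B_k$ is free of rank two. Property 2 of the Proposition gives $A_k,B_k\subseteq A_{k-1}\cap B_{k-1}$, so all four maps packaged into $\delta_k$ are genuine inclusions of submodules of $V(0)$, and the embedding diagram commutes: the two composite inclusions $A_k\hookrightarrow A_{k-1}\hookrightarrow V(0)$ and $A_k\hookrightarrow B_{k-1}\hookrightarrow V(0)$ both agree with the direct inclusion $A_k\hookrightarrow V(0)$. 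This is exactly the uniqueness of singular vectors that produces the coincidences $w_5=S_{1,4}w_1=S_{3,1}w_2$ and $w_7=S_{3,2}w_1=S_{1,5}w_2$ noted above.

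First I would introduce the realization map $\rho_k\colon A_k\oplus B_k\to V(0)$, $\rho_k(x,y)=\iota(x)+\iota(y)$. Its image is $A_k+B_k$, and since $\iota(x)=-\iota(y)$ forces $x=-y\in A_k\cap B_k$, its kernel is $\{(z,-z):z\in A_k\cap B_k\}\cong A_k\cap B_k$. The sign pattern of the matrix $\delta_k$ is arranged precisely so that $\rho_{k-1}\circ\delta_k=0$: passing each of the two components of $\delta_k(x,y)$ down to $V(0)$ pits the route through $A_{k-1}$ against the route through $B_{k-1}$, and the $+$ in $\rho$ against the minus signs in the lower row of $\delta_k$ makes them cancel. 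Conversely, because $A_{k-1}\hookrightarrow V(0)$ and $B_{k-1}\hookrightarrow V(0)$ are injective, the vanishing of both components of $\delta_k(x,y)$ is equivalent to the single equation $\iota(x)+\iota(y)=0$ in $V(0)$; hence $\Ker\delta_k=\Ker\rho_k\cong A_k\cap B_k$. This identification is the crux: combined with $\rho_{k-1}\circ\delta_k=0$ it gives $\Im\delta_k\subseteq\Ker\rho_{k-1}=\Ker\delta_{k-1}$, so the sequence is a complex.

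It then remains to match kernels and images. By property 2, $A_k\cap B_k=A_{k+1}+B_{k+1}=\Im\rho_{k+1}$, so I would take $z\in A_k\cap B_k$, write $z=\iota(a')+\iota(b')$ with $a'\in A_{k+1}$, $b'\in B_{k+1}$, and check that $\delta_{k+1}(a',b')$ recovers the unique $(x,y)$ with $\iota(x)=z=-\iota(y)$, again using injectivity of $A_k,B_k\hookrightarrow V(0)$ to pin down the two components from their images in $V(0)$. This yields $\Im\delta_{k+1}=\Ker\delta_k$ for all $k\ge1$, i.e.\ exactness at every $A_k\oplus B_k$. For exactness at $V(0)$ I would invoke property 1: $\Im\delta_1=A_1+B_1$ lies in the positive-degree part $\bigoplus_{n\ge1}V_n(0,0)=\Ker\varepsilon$ and has codimension one there, so the two coincide; surjectivity of $\varepsilon$ is clear, giving exactness at $\mathbb C$.

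The hard part is not in this formal argument but in the Proposition on which it rests, namely the exact determination of the sums and intersections of the submodules $V(\frac{3k^2\pm k}{2})$ inside $V(0,0)$; that is the Feigin--Fuchs analysis of the $c=0$ embedding diagram, which I am taking as given. Within the formal argument, the only delicate point is the bookkeeping of the signs and of the left/right $U(L_1)$-module conventions, so that $\rho_{k-1}\circ\delta_k=0$ holds on the nose once the four entries of $\delta_k$ are correctly matched to the four inclusions $A_k,B_k\hookrightarrow A_{k-1},B_{k-1}$; after that, injectivity of the inclusions into $V(0)$ does all the remaining work.
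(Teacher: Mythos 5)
The paper itself offers no proof of this theorem: it is quoted from Rocha-Carridi--Wallach and Feigin--Fuchs, and the only internal commentary is the Remark that the original proof in \cite{RochWall} ``seriously used the Goncharova theorem.'' So there is no in-paper argument to compare against; what you have written is a reconstruction, and it is essentially the correct one. Your Mayer--Vietoris bookkeeping --- identifying $\Ker\delta_k$ with $A_k\cap B_k$ via the realization map $\rho_k$, then invoking property 2 of the Proposition to get $A_k\cap B_k=A_{k+1}+B_{k+1}=\Im\rho_{k+1}$ and hence $\Im\delta_{k+1}=\Ker\delta_k$, and property 1 for exactness at $V(0)$ --- is the standard Feigin--Fuchs/BGG-style derivation. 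It is worth noting that your route and the originally cited one point in opposite directions: Rocha-Carridi and Wallach needed Goncharova's computation of $H^*(L_1,\C)$ as an input to establish exactness, whereas your argument (resting instead on the submodule lattice of $V(0,0)$, which you correctly isolate as the genuinely hard, representation-theoretic input) makes Goncharova's theorem a corollary, which is exactly how the paper uses the resolution.

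One point you should make explicit rather than wave at: for $\rho_{k-1}\circ\delta_k=0$ you need the four entries of $\delta_k$ to realize the four inclusions \emph{on the nose}, i.e.\ $S_{1,3k+1}w_{e_-(k)}=S_{2k+1,1}w_{e_+(k)}$ with equality and not merely proportionality --- a scalar discrepancy would destroy the cancellation $\iota(x)-\iota(x)$. Uniqueness of the singular vector at a given level gives proportionality only; equality follows from the normalization $P^{1,\dots,1}_{p,q}=1$ together with the observation that commutators in $U(L_1)$ only raise indices, so the coefficient of $e_1^{e_\pm(k+1)}v$ in each of the two products is the product of the leading coefficients, namely $1$. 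With that supplied, and granting the quoted Proposition, your proof is complete.
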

\begin{corollary}[\cite{FeFu}]
Let $V$ be a $L_1$-module. Then the cohomology $H^*(L_1,V)$ is
isomorphic the cohomology of the following complex:
\begin{equation}
\label{resolution2}
\begin{CD}
{\dots}@<{d_{k{+}1}}<< V{\oplus}V @<{d_k}<< V{\oplus}V
@<{d_{k{-}1}}<< {\dots} @<{d_1}<< V{\oplus}V @<{d_0}<< V
\end{CD},
\end{equation}
with the differentials
\begin{equation}
\label{differential}
\begin{split}
d_k\left(\begin{array}{c}m_1 \\m_2 \end{array}\right)=\left(\begin{array}{cc}S_{1,3k{+}1} & {-}S_{2k+1,1}\\
S_{2k{+}1,2}& {-}S_{1,3k{+}2} \end{array}\right)\left(\begin{array}{c}m_1 \\m_2 \end{array}\right), \; k \ge 1;\\
d_0(m)=\left(\begin{array}{c}S_{1,1}m \\S_{1,2}m
\end{array}\right),\;\; m, m_1, m_2 \in V.
\end{split}
\end{equation}
\end{corollary}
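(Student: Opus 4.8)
The plan is to read this off as the standard computation of $\mathrm{Ext}$ from a projective resolution, with no new input beyond the preceding theorem. For the Lie algebra $L_1$ and an $L_1$-module $V$ one has the natural isomorphism $H^*(L_1,V)\cong\mathrm{Ext}^*_{U(L_1)}(\C,V)$, where $\C$ is the trivial module. By the preceding theorem the exact sequence (\ref{resolution}) is a free, hence projective, resolution of $\C$ over $U(L_1)$; denote its terms by $P_0=V(0)$ and $P_k=V(\tfrac{3k^2-k}{2})\oplus V(\tfrac{3k^2+k}{2})$ for $k\ge1$, with differentials $\delta_k$. Thus $\mathrm{Ext}^*_{U(L_1)}(\C,V)$ is the cohomology of the cochain complex obtained by deleting the augmentation $\C$ and applying the contravariant functor $\mathrm{Hom}_{U(L_1)}(-,V)$.

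First I would identify the cochain groups. Each Verma module appearing in (\ref{resolution}) is free of rank one over $U(L_1)$, generated by its singular vector, so $\mathrm{Hom}_{U(L_1)}(V(n),V)\cong V$ by evaluation at that generator; combined with additivity of $\mathrm{Hom}$ this gives $\mathrm{Hom}_{U(L_1)}(P_0,V)\cong V$ and $\mathrm{Hom}_{U(L_1)}(P_k,V)\cong V\oplus V$ for $k\ge1$, which are precisely the groups of (\ref{resolution2}). Next I would compute the induced maps. By contravariance $d_k=\mathrm{Hom}(\delta_{k+1},V)$ sends $f$ to $f\circ\delta_{k+1}$, and I evaluate on the free generators $g_1,g_2$ of $P_{k+1}$. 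Writing $\delta_{k+1}(g_j)=\sum_i (M_{k+1})_{ij}\,h_i$ in terms of the generators $h_i$ of $P_k$, so that $M_{k+1}=\left(\begin{smallmatrix}S_{1,3k+1}&S_{2k+1,2}\\-S_{2k+1,1}&-S_{1,3k+2}\end{smallmatrix}\right)$ is the matrix of (\ref{resolution}), and using that $f$ is $U(L_1)$-linear, I get $(f\circ\delta_{k+1})(g_j)=\sum_i (M_{k+1})_{ij}\,f(h_i)$. Hence under the identifications above $d_k$ is given by the transpose of $M_{k+1}$ acting on the column $\left(\begin{smallmatrix}m_1\\m_2\end{smallmatrix}\right)$, and transposing $M_{k+1}$ reproduces exactly the matrix in (\ref{differential}); the same computation for $\delta_1=(S_{1,1},S_{1,2})$ yields $d_0(m)=(S_{1,1}m,\,S_{1,2}m)$.

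Freeness and exactness are already supplied by the preceding theorem, so the only genuine content is the bookkeeping of the last paragraph, and this is where I expect the one subtle point to lie. One must check that dualizing a matrix of left-multiplication operators over the noncommutative ring $U(L_1)$ returns the transposed matrix whose entries again act on the left on $V$ — this is exactly what the $U(L_1)$-linearity of $f$ guarantees, pulling each coefficient $(M_{k+1})_{ij}$ out of $f$ on the left, so that no anti-automorphism of $U(L_1)$ intervenes. In particular the minus signs on $S_{2k+1,1}$ and $S_{1,3k+2}$ are merely carried along by the transposition, landing on the off-diagonal and diagonal entries as in (\ref{differential}); verifying that all indices and signs arrive in the claimed positions is the crux of the argument.
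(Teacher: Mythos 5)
Your proposal is correct and is exactly the intended derivation: the paper offers no proof of this corollary (it is cited to Feigin--Fuchs), and its content is precisely the standard computation of $H^*(L_1,V)\cong\mathrm{Ext}^*_{U(L_1)}(\C,V)$ by applying $\Hom_{U(L_1)}(-,V)$ to the free resolution (\ref{resolution}), identifying $\Hom_{U(L_1)}(V(n),V)\cong V$ via evaluation at the generating singular vector, and reading off the transposed matrices. Your handling of the one genuine subtlety --- that $U(L_1)$-linearity pulls the matrix entries out on the left so the dual differential is again left multiplication by the transposed matrix, with the signs landing where (\ref{differential}) puts them --- is also right.
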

Let consider the trivail one-dimensional module $V={\mathbb C}$. All operators
$d_k$ are trivial and we obtain the famous Goncharova theorem.
\begin{theorem}[\cite{G}] The space of $q$-cohomology
$H^q(L_1,{\mathbb C})$ is two-dimensional for all $q \ge 1$,
moreover it is the direct sum of its one-dimensional subspaces:
$$ H^q(L_1,{\mathbb C})=H_{\frac{3q^2 {-} q}{2}}^q(L_1,{\mathbb C})
\oplus H_{\frac{3q^2{+}q}{2}}^q(L_1,{\mathbb C}).  $$
\end{theorem}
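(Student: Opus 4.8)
The plan is to read off the cohomology directly from the complex (\ref{resolution2}) supplied by the preceding Corollary, specialized to the trivial module $V={\mathbb C}$. First I would recall that, by that Corollary, $H^*(L_1,{\mathbb C})$ is computed as the cohomology of the complex with differentials $d_k$ assembled from the operators $S_{p,q}\in U(L_1)$. The decisive observation is that every $S_{p,q}$ appearing in the $d_k$ has positive grading $pq\ge 1$, so as an element of $U(L_1)$ it lies in the augmentation ideal $L_1\cdot U(L_1)$. On the trivial module ${\mathbb C}$ the whole of $L_1$ acts by zero, hence each $S_{p,q}$ acts as the zero operator and every differential $d_k$ vanishes identically.

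With all differentials zero, the cohomology of (\ref{resolution2}) is just the sequence of its own terms. In homological degree $0$ we have the single module $V$, so $H^0={\mathbb C}$; in each homological degree $q\ge 1$ we have $V\oplus V$, so $H^q={\mathbb C}\oplus{\mathbb C}$ is two-dimensional. The two summands in degree $q$ come from the two modules $V(\frac{3q^2-q}{2})$ and $V(\frac{3q^2+q}{2})$ occupying the $q$-th term of the resolution (\ref{resolution}).

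To obtain the refined graded statement I would keep track of the internal grading inherited from the ${\mathbb Z}_+$-grading of $L_1$. The free resolution (\ref{resolution}) is a resolution of \emph{graded} modules: the two free generators of the $q$-th term are the singular vectors of gradings $\frac{3q^2-q}{2}$ and $\frac{3q^2+q}{2}$. Applying $\Hom_{U(L_1)}(-,{\mathbb C})$ and using that the differentials are zero, the two surviving one-dimensional classes in $H^q$ sit precisely in these two gradings, which yields the claimed decomposition $H^q(L_1,{\mathbb C})=H^q_{\frac{3q^2-q}{2}}(L_1,{\mathbb C})\oplus H^q_{\frac{3q^2+q}{2}}(L_1,{\mathbb C})$.

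Finally, I expect the only genuinely delicate point to be the bookkeeping of the gradings rather than the vanishing of the differentials, which is immediate. In particular the entire argument rests on the two cited inputs assumed above — the exactness of (\ref{resolution}) and the fact that it is a free resolution of ${\mathbb C}$ — so there is nothing to verify here beyond confirming that the generator in the $q$-th homological degree carries grading $\frac{3q^2\pm q}{2}$ and that this grading is preserved on passing to cohomology.
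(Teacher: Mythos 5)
Your argument is exactly the one the paper uses: it specializes the complex (\ref{resolution2}) to the trivial module, notes that every $S_{p,q}$ acts by zero (the paper states this in one line: ``All operators $d_k$ are trivial''), and reads off the two-dimensional cohomology with the pentagonal gradings from the free generators of the resolution. Your version is simply a more explicit write-up of the same derivation, including the honest caveat (echoed in the paper's own remark) that the exactness of the resolution is the real content being cited.
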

The numbers $e_{\pm}(q)=\frac{3q^2 \pm q}{2}$ are
called Euler pentagonal numbers. 

\begin{remark}
The original proof of the properties of the free resolution of one-dimensional $L_1$-module
${\mathbb C}$ by Rocha-Carridi and Wallach \cite{RochWall} seriously used the Goncharova
theorem.
\end{remark}

Fuchs and Feigin studied $L_1$-cohomology with coefficients in graded modules $V=\oplus_i V_i$ only with
one-dimensional homogeneous components  $V_i$. We will define them
with the aid of the special basis $f_i, V_i=\langle f_{i}\rangle$
($j \in {\mathbb Z}$ in the infinite dimensional case and $j \in
{\mathbb Z}, m\le j\le n$ in the finite dimensional). For a given
graded $L_1$-module $V=\oplus_i V_i$ let us introduce the numbers
$\sigma_{p,q}(j) \in {\mathbb K}$ such that
$$
S_{p,q}f_j=\sigma_{p,q}(j)f_{j{+}pq}.
$$
\begin{example}
\label{F_lambda_mu} The well-known $L_1$-module $F_{\lambda,\mu}$
of tensor densities  \cite{Fu}:
$$ e_if_j=\left(j+\mu-\lambda(i+1) \right) f_{i+j}, \forall i \in
{\mathbb N}, j \in {\mathbb Z},
 $$
where $\lambda, \mu \in {\mathbb K}$ are two complex parameters.
\end{example}

\begin{corollary}[\cite{FeFu}]
Let $V=\oplus_i V_i$ be a graded $L_1$-module over the field
${\mathbb K}$. Then the one-dimensional cohomology $H^*_s(L_1,V)$
is isomorphic to the cohomology of the following complex:
\begin{equation}
\label{resolution3}
\begin{CD}
{\dots}@<{D_{k{+}1}}<< {\mathbb K}{\oplus}{\mathbb K} @<{D_k}<<
{\mathbb K}{\oplus}{\mathbb K}@<{D_{k{-}1}}<< {\dots}  @<{D_1}<<
{\mathbb K}{\oplus}{\mathbb K} @<{D_0}<< {\mathbb K}
\end{CD},
\end{equation}
where the differentials $D_k$ are assigned by the numerical
matrices
\begin{equation}
\label{D_k}
D_k{=}\left(\begin{array}{cc}\sigma_{1,3k{+}1}\left(s{+}\frac{3k^2{-}k}{2}\right)
& {-}\sigma_{2k+1,1}\left(s{+}\frac{3k^2{-}k}{2}\right)\\
\sigma_{2k{+}1,2}\left(s{+}\frac{3k^2{+}k}{2}\right)&
{-}\sigma_{1,3k{+}2}\left(s{+}\frac{3k^2{+}k}{2}\right)
\end{array}\right),
D_0{=}\left(\begin{array}{c}\sigma_{1,1}(s) \\\sigma_{1,2}(s)
\end{array}\right).
\end{equation}
\end{corollary}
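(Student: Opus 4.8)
The plan is to obtain this statement by specializing the preceding corollary, which already identifies $H^*(L_1,V)$ with the cohomology of the complex (\ref{resolution2}) whose differentials $d_k$ are the $2\times 2$ matrices built from the operators $S_{p,q}\in U(L_1)$. Passing from (\ref{resolution2}) to the numerical complex (\ref{resolution3}) is then a matter of exploiting the grading of $V$, so no new homological input is needed beyond that corollary.

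The one structural fact that drives everything is that each $S_{p,q}$ is a \emph{homogeneous} element of $U(L_1)$ of degree $pq$: in the expansion $S_{p,q}=\sum P^{i_1,\dots,i_s}_{p,q}(t)\,e_{i_1}\cdots e_{i_s}$ every monomial satisfies $i_1+\dots+i_s=pq$, and $e_i$ raises the grading by $i$. Hence, for $V=\bigoplus_i V_i$, the operator $S_{p,q}$ carries $V_j$ into $V_{j+pq}$, so every matrix entry of every $d_k$ in (\ref{differential}) is a graded map. I would conclude that (\ref{resolution2}) is a complex of graded vector spaces with grading-preserving differentials, so that $H^*(L_1,V)=\bigoplus_s H^*_s(L_1,V)$; since cohomology commutes with direct sums, it suffices to analyse each homogeneous summand $H^*_s$ separately.

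Next I would fix the internal degree $s$ and identify the one-dimensional pieces contributed by each term. The $k$-th term of the resolution (\ref{resolution}) is the rank-two free module $V(\tfrac{3k^2-k}{2})\oplus V(\tfrac{3k^2+k}{2})$, whose two generators sit in Virasoro gradings $\tfrac{3k^2-k}{2}$ and $\tfrac{3k^2+k}{2}$. Applying $\Hom_{L_1}(-,V)$ and selecting the degree-$s$ cochains, the first summand contributes the line $V_{s+\frac{3k^2-k}{2}}=\langle f_{s+\frac{3k^2-k}{2}}\rangle$ and the second the line $V_{s+\frac{3k^2+k}{2}}=\langle f_{s+\frac{3k^2+k}{2}}\rangle$, so each term of (\ref{resolution2}) becomes $\mathbb{K}\oplus\mathbb{K}$ as in (\ref{resolution3}) (the position $k=0$ reduces to the single line $\langle f_s\rangle$, giving $D_0$ from $d_0$ of (\ref{differential})). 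Reading off the matrix of $d_k$ on these lines by means of $S_{p,q}f_j=\sigma_{p,q}(j)f_{j+pq}$ replaces each operator entry by the scalar $\sigma_{p,q}$ evaluated at the $V$-grading of the source line on which it acts, producing exactly the numerical matrix $D_k$ of (\ref{D_k}).

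The step I expect to require the most care is the grading bookkeeping of this last paragraph: one must track precisely which homogeneous component of $V$ each summand feeds into, so that the pentagonal shifts $\tfrac{3k^2\pm k}{2}$ appear as the arguments of the $\sigma_{p,q}$, and so that the two operator entries pointing at a given target line indeed land in the same graded line of the next term. This compatibility is the assertion that (\ref{resolution}) is graded, and it is confirmed by the degree identities
\[
\tfrac{3k^2-k}{2}+(3k+1)=\tfrac{3k^2+k}{2}+(2k+1)=\tfrac{3(k+1)^2-(k+1)}{2},
\]
\[
\tfrac{3k^2-k}{2}+(4k+2)=\tfrac{3k^2+k}{2}+(3k+2)=\tfrac{3(k+1)^2+(k+1)}{2},
\]
which match the degrees $3k+1,\,2k+1,\,4k+2,\,3k+2$ of the four entries $S_{1,3k+1},S_{2k+1,1},S_{2k+1,2},S_{1,3k+2}$ against the two target lines. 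Once these are checked, the identification of the homogeneous complex with (\ref{resolution3}) is automatic, and taking the direct sum over $s$ recovers the full cohomology.
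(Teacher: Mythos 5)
The paper offers no argument for this corollary---it is quoted from Feigin--Fuchs---so your derivation is supplying a proof the paper omits, and the route you take (the $S_{p,q}$ are homogeneous of degree $pq$, hence the complex (\ref{resolution2}) splits into graded pieces, each of which is read off on the one-dimensional lines $\langle f_{s+\frac{3k^2\pm k}{2}}\rangle$) is the standard and correct one. Your degree identities are right, and your rule that each entry $\sigma_{p,q}$ must be evaluated at the grading of the source line on which it acts is exactly what the definition $S_{p,q}f_j=\sigma_{p,q}(j)f_{j+pq}$ forces.

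However, that rule does not produce the matrix (\ref{D_k}) \emph{verbatim}, as you assert. In $d_k$ the two entries of a given row act on the two different input lines $\langle f_{s+\frac{3k^2-k}{2}}\rangle$ and $\langle f_{s+\frac{3k^2+k}{2}}\rangle$, so the arguments of $\sigma$ come out constant along \emph{columns}, not rows: your computation yields $-\sigma_{2k+1,1}\bigl(s+\frac{3k^2+k}{2}\bigr)$ in the $(1,2)$ slot and $\sigma_{2k+1,2}\bigl(s+\frac{3k^2-k}{2}\bigr)$ in the $(2,1)$ slot, whereas (\ref{D_k}) prints $s+\frac{3k^2-k}{2}$ and $s+\frac{3k^2+k}{2}$ there. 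Your own displayed identity $\frac{3k^2+k}{2}+(2k+1)=\frac{3(k+1)^2-(k+1)}{2}$ already shows that $S_{2k+1,1}$ is the entry fed by the line indexed by $s+\frac{3k^2+k}{2}$. Since the two coordinates of the input vector live on lines whose gradings differ by $k\neq 0$, no consistent convention can make row-constant arguments correct; the off-diagonal arguments in the printed (\ref{D_k}) are evidently transposed. You should either flag this as a typo and state the corrected matrix, or reconcile the conventions explicitly, rather than claiming exact agreement with a formula your own bookkeeping contradicts.
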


Fuchs and Feigin \cite{FeFu} have not found general explicit formulas for singular vectors
of the type $S_{p,1}$ and $S_{p,2}$ but they managed however to find formulae for elements
$\sigma_{p,q}(j)$ for modules $F_{\lambda,\mu}$ (using other arguments). But  other graded $L_1$-modules are also can be important for applications \cite{Mill} and require explicit formulas for singular vectors $S_{p,1}$ and $S_{p,2}$. The main result of this article, together with the Benoit-Saint-Aubin theorem provides us with the formulae useful for cohomology calculations.

\end{document}